\newtheorem{problem}{Problem}
\newtheorem{theo}[problem]{Theorem}
\newtheorem{defin}[problem]{Definition}
\newtheorem{prop}[problem]{Proposition}
\newtheorem{cor}[problem]{Corollary}
\newtheorem{lema}[problem]{Lemma}
\begin{document}
\date{July 31, 2021}
\title{Bier spheres of extremal volume \\ and generalized permutohedra}
\author{{Filip D. Jevti\'{c}} \\{\small Mathematical Institute}\\[-2mm] {\small SASA,   Belgrade}
\and Rade T. \v Zivaljevi\' c\\ {\small Mathematical Institute}\\[-2mm] {\small SASA,   Belgrade}}

\maketitle

\begin{abstract}{A Bier sphere $Bier(K) = K\ast_\Delta K^\circ$, defined as the deleted join of a simplicial complex and its Alexander dual $K^\circ$, is a purely combinatorial object (abstract simplicial complex). Here we study a hidden geometry of Bier spheres by describing their natural geometric realizations, compute their volume, describe an effective criterion for their polytopality, and associate to $K$ a natural fan $Fan(K)$,  related to the Braid fan. Along the way we establish a connection of Bier spheres  of maximal      volume with recent generalizations of the classical Van Kampen-Flores theorem and clarify the role of Bier spheres in the theory of generalized  permutohedra.}

\end{abstract}

One of the main new results of \cite{jevtic_bier_2019} was the observation (\cite[Theorem 3.1]{jevtic_bier_2019}) that each Bier sphere $Bier(K)$, defined as a canonical  triangulation of a $(n-2)$ sphere $S^{n-2}$ associated to an abstract simplicial complex $K\subsetneq 2^{[n]}$, admits a starshaped embedding in $\mathbb{R}^{n-1}$.

\medskip
It turns out that the radial fan $Fan(K)$ of the starshaped embedding of the Bier sphere $Bier(K)$, described in the proof of this result, is a coarsening of the \emph{Braid arrangement fan}. This fact was not emphasized in \cite{jevtic_bier_2019}, however it is interesting in itself and certainly deserves further study.

\medskip
Recall that the Braid arrangement fan is the normal fan of the standard permutohedron   \cite{ziegler_lectures_1995} and that the coarsening of the Braid fan leads to an important and well studied class of \emph{generalized permutohedra} \cite{postnikov_permutohedra_2009, postnikov_faces_2007, carr_coxeter_2006, devadoss_space_2002, zelevinsky_nested_2006}
or deformed permutohedra, as they are called by some authors.

\medskip
In this paper we take a closer look at the fan $Fan(K)$ (the \emph{canonical} or \emph{Bier fan} of a simplicial complex $K$), with the goal to clarify the role of Bier spheres in the theory of generalized permutohedra and study other  geometric properties of Bier spheres arising from this construction.

\bigskip
The main new results of the paper are the following.

In Section \ref{sec:Bier_fans} we give a combinatorial proof that $Fan(K)$ is refined by the braid fan, relying on the \emph{preposets-braid cones} dictionary from \cite{postnikov_faces_2007}. In particular we show that the maximal cones of $Fan(K)$ are associated with \emph{tree posets} which have precisely one node which is not a leaf.

In Section \ref{sec:volume} we study  Bier spheres (or rather the associated starshaped sets $Star(K)$) of extremal volume. In particular we show (Proposition \ref{prop:max-vol})  that Bier spheres of maximal volume are closely related to the class of \emph{nearly neighborly Bier spheres}, studied in \cite{bjorner_bier_2004}, and \emph{balanced simplicial complexes} \cite{jojic_tverberg_2021}, which provide a natural class of examples extending the classical Van Kampen-Flores theorem, see \cite[Theorem 3.5]{jojic_tverberg_2021}.

One of the consequences of Propositions \ref{prop:vol-3-cases} and \ref{prop:max-vol} is that all starshaped sets $Star(K)$ of maximal volume coincide with one and the same,  universal $(n-1)$-dimensional convex set (convex polytope), denoted by $\Omega_n$ and referred to as the \emph{Van Kampen-Flores polytope}. The structure of the Van Kampen-Flores polytope is clarified (and its name explained) in Sections \ref{sec:volume} and \ref{sec:hypersimplex}, in particular we show (Theorem \ref{thm:R_n=hypersimplex}) that the polar dual of $\Omega_n$ is affine-isomorphic to a \emph{median hypersimplex}.

In Section \ref{sec:wall-crossing} we prove a $K$-submodularity theorem which for polytopal Bier spheres plays the role similar to the role of classical submodular functions (polymatroids) in the theory of generalized permutohedra.
With the aid of this result we obtain a useful criterion for a Bier sphere to be polytopal.

\medskip
For the reader's convenience here is a glossary with brief descriptions of the main objects studied in this paper.

\medskip
$Bier(K) = K\ast_\Delta K^\circ$, the Bier sphere of $K$, is a combinatorial object (simplicial complex), defined as a deleted join of two simplicial complexes ($K$ and its Alexander dual $K^\circ$).

\medskip
$Fan(K) = BierFan(K)$, the \emph{canonical} or the \emph{Bier fan} of $K$, is a complete, simplicial fan in $H_0 \cong \mathbb{R}^{n-1}$, associated to a simplicial complex $K\subsetneq 2^{[n]}$.

\medskip
$\mathcal{R}_{\pm\delta}(Bier(K))$ is the \emph{canonical starshaped realization} of $Bier(K)$ described in \cite[Theorem 3.1]{jevtic_bier_2019}.

\medskip
$Star(K)$ is the starshaped body in $H_0$ whose boundary is the sphere $\mathcal{R}_{\pm\delta}(Bier(K))$.

\medskip
$\Omega_n$ is a universal, $(n-1)$-dimensional convex polytope (the Van Kampen-Flores polytope) which is equal, as a convex body, to $Star(K)$ for each Bier sphere of maximal volume.

\begin{figure}[htb]
    \centering
    \includegraphics{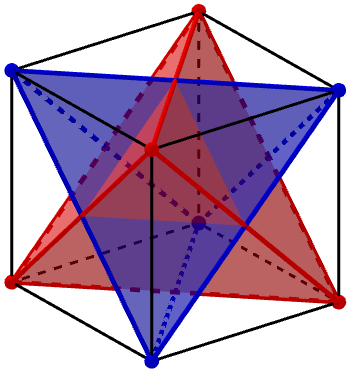}
    \caption{The $3$-dimensional cube as the Van Kampen-Flores polytope $\Omega_4$.}
    \label{fig:prva}
\end{figure}



\section{Bier fans of simplicial complexes}
\label{sec:Bier_fans}

Let $K\subsetneq 2^{[n]}$ be a simplicial complex and $K^\circ$ its Alexander dual.
By definition, see \cite{matousek_using_2008},  the associated Bier sphere is the deleted join,
\begin{equation}\label{eq:Bier-def}
Bier(K):= K \ast_\Delta K^\circ  .
\end{equation}
Side by side with the more standard $(A_1, A_2)\in Bier(K)$, we use an extended $\tau = (A_1,A_2; B)$ notation for simplices in the Bier sphere, where $B := [n]\setminus (A_1\cup A_2)$. Hence, an ordered partition $A_1\sqcup A_2 \sqcup B = [n]$ corresponds to a simplex $\tau\in Bier(K)$ if and only if $\emptyset \neq B \neq [n]$. In the ``interval notation'', used in \cite{bjorner_bier_2004}, the simplex $\tau = (A_1, A_2) = (A_1,A_2; B)$ can be recorded as the pair $(A_1, A_2^c)$.

For example the facets of $Bier(K)$ are triples $\tau = (A_1, A_2; B)$ where $B = \{\nu\}$ is a singleton. In this case $\tau$ is
(in the interval notation) determined by the pair $A\subsetneq C$, where $A = A_1\in K$ and $C = A_1\cup \{\nu\}\notin K$.

\medskip\noindent
The braid arrangement is the arrangement of hyperplanes $Braid_n = \{H_{i,j}\}_{1\leqslant i < j\leqslant n}$ in $H_0$ where
$H_0 := \{x\in \mathbb{R}^n \mid x_1+\dots+ x_n =0\} \cong \mathbb{R}^n/(1,\dots, 1)\mathbb{R}$ and
$H_{i,j} := \{ x \mid x_i - x_j =0\}$. The hyperplanes $H_{i,j}$ subdivide the space $H_0$ into the polyhedral
cones
\begin{equation}
C_\pi := \{x\in H_0 \mid x_{\pi(1)}\leqslant x_{\pi(2)}\leqslant \dots \leqslant x_{\pi(n)}\}
\end{equation}
labeled by permutations $\pi\in S_n$. The cones $C_\pi$, together with their faces, form a complete simplicial fan
in $H_0$, called the {\em braid arrangement fan}.

\subsection{Preposets and Bier fans}\label{sec:preposet}

 A binary relation $R\subseteq [n]\times [n]$ is a \emph{preposet} on $[n]$ if it is both \emph{reflexive} and \emph{transitive}. Following \cite{postnikov_faces_2007}, in explicit calculations we often write $\preccurlyeq_R$, instead of $R$, and $x\preccurlyeq_R y$, instead of $(x,y)\in R$ or $xRy$. Given a preposet $\preccurlyeq_R$ we write $x\prec_R y$ if $\preccurlyeq_R$ and $x\neq y$, and $x \equiv_R y$ if both $x \preccurlyeq_R y$ and $y \preccurlyeq_R x$.

\medskip
For a more detailed account and, in particular, the  \emph{preposet-braid cone dictionary}, which describes the geometry of braid cones in the language of preposets, the reader is refereed to \cite{postnikov_faces_2007} (Sections~3.3 and 3.4).

\medskip
Let $\tau = (A_1, A_2; B)\in Bier(K)$. The associated preposet $\preccurlyeq_\tau$ is the binary relation defined as
the reflexive closure of the relation
\begin{equation}\label{eqn:preposet}
\rho_\tau  := (A_1 \times B) \cup (B\times B) \cup (B\times A_2) \subseteq [n]\times [n] \, .
\end{equation}
Following \cite{postnikov_faces_2007} (Section~3.4), the associated {\em braid cone} is
\begin{equation}\label{eqn:braid-cone}
 Cone(\preccurlyeq_\tau) =  Cone(\tau) = Cone(A_1, A_2; B) = \{x\in H_0  \mid x_i \leqslant x_j \mbox{ {\rm for each} } (i,j)\in \rho_\tau \} \, .
\end{equation}
In other words $Cone(\preccurlyeq_\tau)$ is described by all inequalities $x_i \leqslant x_j$, where either $(i,j)\in A_1\times B$ or $(j,i)\in A_2\times B$, and all
equalities $x_i = x_j$ for all pairs $(i,j)\in B\times B$.

\medskip
The original proof (and the statement) of the following theorem is more geometric, emphasising the starshaped embedding $\mathcal{R}_{\pm\delta}(Bier(K))$ of the sphere $Bier(K)$. Here we give a different and more combinatorial proof, which uses the preposet-braid cone dictionary.

\begin{theo}{\rm (\cite[Theorem 3.1]{jevtic_bier_2019})}\label{thm:star}
 Let $K\subsetneq 2^{[n]}$ be a simplicial complex. Then the collection of convex cones
\begin{equation}\label{eqn:fanfan}
     Fan(K) = \{Cone(\preccurlyeq_\tau)\}_{\tau\in Bier(K)}
\end{equation}
 is a complete simplicial fan in $H_0 = \{x\in \mathbb{R}^n \mid x_1+\dots + x_n = 0\}$, referred to as the \emph{canonical fan} associated to $K$. Moreover, the face poset $FaceFan(K)$ is
 isomorphic to the (extended) face poset $FaceBier_0(K)$ of the Bier sphere $Bier(K)$. The construction of the canonical fan is faithful in the sense that if $Fan(K_1) = Fan(K_2)$ then $K_1 = K_2$.
 \end{theo}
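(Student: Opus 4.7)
The plan is to leverage the preposet--braid cone dictionary of \cite{postnikov_faces_2007} throughout. The preposet $\preccurlyeq_\tau$ defined by (\ref{eqn:preposet}) has a very uniform shape: the only non-trivial equivalence class of $\equiv_\tau$ is $B$, and modulo this class the Hasse diagram is a two-level tree with $[B]$ as the single internal node, the elements of $A_1$ hanging below and those of $A_2$ above. This tree structure ensures that $Cone(\preccurlyeq_\tau)$ is a simplicial cone, and its dimension inside $H_0$ equals $n-|B|$. In particular, the facets of $Bier(K)$ (those with $|B|=1$) correspond to full-dimensional cones of $H_0$, and codimension grows in step with $|B|$.

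For completeness, I would argue directly that the braid fan refines $Fan(K)$. Given $x$ in the open chamber $C_\pi^\circ$ with $x_{\pi(1)}<\cdots<x_{\pi(n)}$, downward closure of $K$ together with $[n]\notin K$ implies that the predicate $\{\pi(1),\dots,\pi(i)\}\in K$ fails at a unique first index, yielding a transition $i^\ast\in\{0,\dots,n-1\}$ with $A_1:=\{\pi(1),\dots,\pi(i^\ast)\}\in K$ and $A_1\cup\{\pi(i^\ast+1)\}\notin K$. Setting $v=\pi(i^\ast+1)$ and $A_2=[n]\setminus(A_1\cup\{v\})$ gives, via the definition of the Alexander dual, a facet $\tau=(A_1,A_2;\{v\})\in Bier(K)$; the ordering $x_{\pi(1)}\leqslant\cdots\leqslant x_{\pi(n)}$ then places $x$ inside $Cone(\tau)$. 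Since $\pi\mapsto\tau$ is well-defined on open chambers, any two distinct facets of $Fan(K)$ have disjoint relative interiors, and the maximal cones tile $H_0$.

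The face poset isomorphism also comes from the dictionary: a face $\tau'=(A_1',A_2')$ of $\tau=(A_1,A_2)$ in $Bier(K)$ is described by $A_1'\subseteq A_1$ and $A_2'\subseteq A_2$, which on the preposet side amounts to absorbing the removed elements $(A_1\setminus A_1')\cup(A_2\setminus A_2')$ into the equivalence class $B$, i.e.\ turning the associated $\leqslant$-relations into equalities. This is exactly the operation producing a face of $Cone(\preccurlyeq_\tau)$; conversely, every face of this simplicial cone arises this way, since the two-level tree admits no other compatible class-merging. Combined with the previous step, this both confirms that the collection $Fan(K)$ is a genuine fan and identifies $FaceFan(K)$ with $FaceBier_0(K)$.

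Faithfulness is then a short reconstruction: from each maximal cone one reads off its defining preposet, whose distinguished element $v$ (the unique index comparable to every other) identifies $B$ and hence the triple $(A_1,v,A_2)$ with $A_1\in K$ and $A_1\cup\{v\}\notin K$; the facets of $K$ are recovered as the maximal $A_1$'s so obtained, and $K$ as their downward closure. I expect the main technical obstacle to be, as usual when promoting a collection of cones to a fan, the verification that any two cones intersect along a common face --- this is the substance of the third step, and the preposet dictionary handles it cleanly by converting set-theoretic containments among the triples $(A_1,A_2,B)$ into the desired cone-theoretic relations.
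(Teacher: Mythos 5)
Your covering argument for the maximal cones is essentially the paper's: for generic $x$ ordered by $\pi$, the first failing initial segment produces the unique facet $\tau=(A_1,A_2;\{\nu\})$ with $C_\pi\subseteq Cone(\tau)$. But two things you assert are not actually proved, and one of them is the heart of the theorem. First, ``since $\pi\mapsto\tau$ is well-defined on open chambers, any two distinct facets of $Fan(K)$ have disjoint relative interiors'' is a non sequitur: well-definedness of your assignment does not exclude that some other facet $\tau'$ also contains the point $x$. The paper closes this by the converse computation: if $x$ with strictly increasing coordinates lies in $Cone(\tau')$, the inequalities force $A_1'$ and $A_2'$ to be exactly the initial and final segments of $\pi$ split at $\nu'$, and minimality (using $A_1'\in K$, $A_1'\cup\{\nu'\}\notin K$) gives $\tau'=\tau$. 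This is easy to add, so by itself it is a repairable omission.

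The genuine gap is the fan property itself, i.e.\ that $Cone(\tau)\cap Cone(\tau')$ is a \emph{common face} of both cones for arbitrary (not necessarily maximal) $\tau,\tau'\in Bier(K)$. Knowing that the maximal cones are simplicial, tile $H_0$, and are unions of braid chambers does not imply face-to-face intersections, and your face-poset paragraph only describes the faces of a single cone $Cone(\preccurlyeq_\tau)$; so your claim that this ``confirms that the collection $Fan(K)$ is a genuine fan'' does not follow. You acknowledge the issue in the last paragraph but only assert that ``the preposet dictionary handles it cleanly,'' without an argument. In the paper this is precisely Lemma~1: the transitive closure of $\preccurlyeq_\tau\cup\preccurlyeq_{\tau'}$ equals $\preccurlyeq_{\tau''}$ with $A_1''=A_1\cap A_1'$, $A_2''=A_2\cap A_2'$, and this closure is a \emph{contraction} of both preposets, hence $Cone(\tau)\cap Cone(\tau')=Cone(\tau'')$ is a common face. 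The proof is not a formal consequence of the two-level tree shape of the preposets: it crucially uses Alexander duality, via the transversality implication $X\notin K,\ Y\notin K^\circ\Rightarrow X\cap Y\neq\emptyset$, to show that the blocks $B$ and $B'$ (and all elements of the symmetric differences $A_1\Delta A_1'$, $A_2\Delta A_2'$) are forced into one equivalence class. For an arbitrary pair of complexes in place of $(K,K^\circ)$ the analogous collection of cones need not be a fan, so this duality input cannot be skipped; since it appears nowhere in your sketch, the central step of the proof is missing.
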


\begin{proof}
The faithfulness of the construction is quite immediate, since one can recover both $K$ and $K^\circ$ from the preposets corresponding to maximal
cones in $Fan(K)$. Moreover, the structure of the face poset of $Fan(K)$ is easily recovered from (\ref{eqn:fanfan}).

\medskip
Let us begin the proof that  $Fan(K)$ is a complete, simplicial fan by showing that for each permutation $\pi\in S_n$ there exists exactly one facet $\tau = (A_1, A_2; B) = (A_1, A_2; \{\nu\})$
of the Bier sphere $Bier(K)$ such that,
\begin{equation}
      C_\pi = \{x\in H_0 \mid x_{\pi(1)}\leqslant x_{\pi(2)}\leqslant \dots \leqslant x_{\pi(n)}\} \subseteq Cone(\tau) \, .
\end{equation}
Since $[n]\notin K$ we know that $\{k \mid \{\pi(j)\}_{j\leqslant k} \notin K\} \neq \emptyset$. Let $p = \min\{k \mid
\{\pi(j)\}_{j\leqslant k} \notin K\}$ and let $\nu = \pi(p)$. By construction $A_1:= \{\pi(j)\}_{j< p} \in K$ and
$A_2:= \{\pi(j)\}_{j > p} \in K^\circ$, and it immediately follows that $C_\pi \subseteq Cone(\tau)$ where $\tau = (A_1, A_2; \{\nu\})$.

\medskip Conversely, let us suppose that $Int(C_\pi)\cap Cone(\tau') \neq\emptyset$ where $\tau' = (A_1', A_2', \{\nu'\})\in Bier(K)$.
In other words there exists $x\in Cone(\tau')$ such that
\[
x_{\pi(1)}< x_{\pi(2)}< \dots< x_{\pi(n)} \, .
\]
Let $\nu' = \pi(p')$. Then the condition $x\in Cone(\tau')$ implies that $\{\pi(j)\}_{j< p'} \subseteq A_1' \in K$ and
$\{\pi(j)\}_{j > p'}\subseteq A_2' \in K^\circ$, which immediately implies $p=p'$ and $\tau = \tau'$.

\medskip
If $\tau = (A_1, A_2; B)$ and  $\tau' = (A_1', A_2'; B')$ are two, not necessarily maximal, faces of $Bier(K)$, then
$Cone(\tau) \cap Cone(\tau') = Cone(\tau'')$ where $\tau'' = (A_1'', A_2''; B'')$ is the simplex determined by
the conditions $A_1'' = A_1\cap A_1'$ and $A_2'' = A_2\cap A_2'$. Indeed, this follows from the preposet-braid cone dictionary,
see \cite[Proposition~3.5]{postnikov_faces_2007}, and the following lemma.

\begin{lema}
Transitive closure of the relation $\preccurlyeq_\tau \cup \preccurlyeq_{\tau'}$ coincides with the relation $\preccurlyeq_{\tau''}$. Moreover, $\preccurlyeq_{\tau''}$ is a {\em contraction}  (in the sense of \cite{postnikov_faces_2007}, Section~3.3) of both $\preccurlyeq_{\tau}$ and $\preccurlyeq_{\tau'}$.
\end{lema}

\medskip\noindent
{Proof of Lemma~1:}
Since $\rho_\tau \cup \rho_{\tau'} \subseteq \rho_{\tau''}$ it is sufficient to show that the transitive closure $\preccurlyeq$ of $\preccurlyeq_\tau \cup \preccurlyeq_{\tau'}$ contains the relation $\rho_{\tau''}$. This will follow if we prove that
\begin{equation}\label{eqn:lemma}
 i\preccurlyeq j  \mbox{ {\rm  for each pair of elements in} } B'' = (A_1\Delta A_1')\cup (A_2\Delta A_2')\cup B\cup B' \, .
\end{equation}
As a first step in the proof of (\ref{eqn:lemma}), let us show that $B\cup B'\subseteq B''$.

\medskip
As an immediate consequence of the definition of the Alexander dual $K^\circ$ of a simplicial complex $K$, we obtain the implication
\begin{equation}\label{eqn:transversal}
    X\notin K \mbox{ {\rm and} } Y\notin K^\circ \quad \Rightarrow \quad X\cap Y \neq\emptyset \, .
\end{equation}
From here, in light of $A_1\cup B \notin K$ and $A_2'\cup B' \notin K^\circ$, we deduce $(A_1\cup B)\cap (A_2'\cup B')\neq \emptyset$. Choose $s\in B$ and $t\in B'$ and assume $z\in (A_1\cup B)\cap (A_2'\cup B')$. Then, directly from the definition of preposets $\preccurlyeq_{\tau}$ and $\preccurlyeq_{\tau'}$, we obtain the relation $t\preccurlyeq_{\tau'} z \preccurlyeq_{\tau} s$ and, as a consequence, $t\preccurlyeq s$.

\medskip
Similarly, from $A_1'\cup B' \notin K$ and $A_2\cup B \notin K^\circ$, we deduce that $z'\in (A'_1\cup B)\cap (A_2\cup B)$ for some $z'$.
If $s\in B$ and $t\in B'$ then $s\preccurlyeq_{\tau} z' \preccurlyeq_{\tau'} t$, and as a consequence $s\preccurlyeq t$.
The relations $s\preccurlyeq t$ and $t\preccurlyeq s$ together imply that $s \equiv_\preccurlyeq t$, which completes the proof of the inclusion $B\cup B' \subseteq B''$.

\medskip
For the completion of the proof of (\ref{eqn:lemma}) let us begin with the case $z\in A_1\setminus A_1'$. Then $B' \preccurlyeq_{\tau'} z \preccurlyeq_{\tau} B$ and as a consequence $z\in B''$. Similarly, if $z\in A_2\setminus A_2''$ then $B \preccurlyeq_{\tau'} z \preccurlyeq_{\tau} B'$ and again $z\in B''$. The other two cases $A_1'\setminus A_1\neq\emptyset$ and $A_2'\setminus A_2\neq\emptyset$ are treated analogously.

\medskip For the completion of the proof of Lemma 1 we need to show that both $A_1\cap A_1'$  and $A_2\cap A_2'$ are disjoint from $B''$.
This is obvious since if $z \in A_1\cap A_1'$ ($z \in A_2\cap A_2'$) then $z$ is never a right hand side (respectively left hand side) of a relation involving $\preccurlyeq_{\tau}$ or $\preccurlyeq_{\tau'}$ (except for the trivial relations $z \preccurlyeq_{\tau} z$ and $z \preccurlyeq_{\tau'} z$).
\end{proof}

 \medskip
 The following proposition shows that the fan $Fan(K)$ is isomorphic to the radial fan associated to the starshaped realization
 $\mathcal{R}_{\pm\delta}(Bier(K))$ of the Bier sphere $Bier(K)$, constructed in \cite{jevtic_bier_2019}, Theorem~3.5. The reader is referred to Section \ref{sec:volume} (see also \cite{jevtic_bier_2019}) for all undefined concepts and related facts. In particular the $\delta$-realization is a special case of the $b$-realization from Section \ref{sec:volume} where the vertices $\delta = \{\delta_1,\dots, \delta_{n}\}$ of the selected simplex are the vectors $\delta_i := e_i - (1/n)(e_1+\dots+ e_n)$.

\medskip\noindent
\begin{prop}\label{prop:coincidence}
 The fan $Fan(K)$ coincides with the negative of the fan described in \cite{jevtic_bier_2019}, Theorem~3.5. More explicitly,
\begin{equation}\label{eqn:fans}
    Fan(K)  =  {\rm Cone}_{\mp\delta}(K) = RadialFan(\mathcal{R}_{\mp\delta}(Bier(K)))
\end{equation}
where
\begin{equation}\label{eq:cone}
{\rm Cone}_{\mp\delta}(K) = \{{Cone}(R_{-\delta}(S)\ast R_{\delta}(T)) \mid (S,T)\in K\ast_\Delta K^\circ\} \, .
\end{equation}
\end{prop}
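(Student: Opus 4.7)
My plan is to verify the equality (\ref{eqn:fans}) directly cone by cone, by showing that for each simplex $\tau = (A_1, A_2; B) \in Bier(K)$ the cone $Cone(R_{-\delta}(A_1) \ast R_\delta(A_2))$, namely the positive hull of $\{-\delta_i \mid i \in A_1\} \cup \{\delta_j \mid j \in A_2\}$, coincides with the preposet cone $Cone(\preccurlyeq_\tau)$ of (\ref{eqn:braid-cone}). Since Theorem \ref{thm:star} already guarantees that $\{Cone(\preccurlyeq_\tau)\}_{\tau\in Bier(K)}$ is a complete simplicial fan whose face poset matches $FaceBier_0(K)$, and since the indexing on both sides of (\ref{eqn:fans}) is by the same $\tau$, a pointwise equality of individual cones will immediately imply the equality of the two fans.

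For this pointwise check I would parametrize a generic point of the right-hand cone as $x = -\sum_{i\in A_1} a_i \delta_i + \sum_{j\in A_2} b_j \delta_j$ with $a_i, b_j \geq 0$, and expand using $\delta_k = e_k - \frac{1}{n}(e_1+\cdots+e_n)$. Setting $c := \frac{1}{n}\bigl(\sum_i a_i - \sum_j b_j\bigr)$, a one-line calculation yields $x_k = c$ for $k \in B$, $x_i = c - a_i \leq c$ for $i \in A_1$, and $x_j = c + b_j \geq c$ for $j \in A_2$ -- exactly the defining conditions of $Cone(\preccurlyeq_\tau)$ in (\ref{eqn:braid-cone}). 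For the reverse inclusion, given $x \in Cone(\preccurlyeq_\tau)$, take $c$ to be the common value of $x_k$ for $k \in B$ and set $a_i := c - x_i \geq 0$, $b_j := x_j - c \geq 0$; the ambient condition $\sum_k x_k = 0$ automatically forces $nc = \sum_i a_i - \sum_j b_j$, so $x$ is recovered as the prescribed conic combination of the generators.

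The only slightly delicate point is keeping sign conventions straight. The realization in \cite{jevtic_bier_2019} places $R_{+\delta}$ on the $K$-side, whereas the preposet convention adopted here assigns $A_1$ to the \emph{small}-coordinates side of the cone; this forces the replacement by $R_{-\delta}$ and is the source of both the minus sign in ${\rm Cone}_{\mp\delta}(K)$ and the word ``negative'' in the statement. Beyond this bookkeeping I anticipate no real obstacle: once the parametrization is fixed, the argument collapses to an elementary linear-algebra check inside $H_0$, and the passage from cone-wise equality to fan-wise equality is immediate via Theorem \ref{thm:star}.
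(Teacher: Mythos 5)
Your proof is correct, and it follows the same cone-by-cone strategy as the paper while executing the identification differently. The paper treats only the maximal cones, where $\preccurlyeq_\tau$ is a tree-poset: it extracts the extremal rays of $Cone(\tau)$ from the inequality description (\ref{eqn:braid-cone}) by turning all inequalities but one into equalities, and recognizes them as the rays spanned by $\hat\delta_i$ (a positive multiple of $-\delta_i$) for $i\in S$ and by $\delta_j$ for $j\in T$, i.e.\ exactly the extremal rays of ${\rm Cone}(R_{-\delta}(S)\ast R_{\delta}(T))$; this implicitly uses simpliciality (the rays determine the cone) and the fact that a complete fan is determined by its maximal cones. You instead prove the set equality $Cone(\preccurlyeq_\tau)={\rm Cone}(R_{-\delta}(A_1)\ast R_{\delta}(A_2))$ by a direct two-way inclusion with explicit coefficients $a_i=c-x_i$, $b_j=x_j-c$, and you do it for every face $\tau=(A_1,A_2;B)$, not only facets, so no extremal-ray computation or simpliciality input is needed and the face-by-face correspondence of the two fans is made explicit. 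Two minor remarks: in the reverse inclusion you need $B\neq\emptyset$ for $c$ to be defined, which indeed holds for every simplex of $Bier(K)$ (Section \ref{sec:Bier_fans}); and the closing appeal to Theorem \ref{thm:star} is superfluous, since equality of the two indexed collections of cones is already the asserted identity between $Fan(K)$ and ${\rm Cone}_{\mp\delta}(K)$, the identification with the radial fan of $\mathcal{R}_{\mp\delta}(Bier(K))$ being the content of Theorem 3.5 of \cite{jevtic_bier_2019}, which both you and the paper invoke rather than reprove.
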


\begin{proof}
Extremal rays of the simplicial cone ${Cone}(R_{-\delta}(S)\ast R_{\delta}(T))$ are generated by the vectors
$\{\hat\delta_i\}_{i\in S}\cup \{\delta_j\}_{j\in T}$, where $\hat\delta_j$ is the barycenter of the facet $\Delta_i\subset \Delta_\delta :={\rm Conv}\{\delta_k\}_{k\in [n]}$,
opposite to the vertex $\delta_i\in \Delta_\delta$.

\medskip Let us show that the extremal rays of the cone $Cone(\tau)$, where $\tau = (S, T; \{\nu\})$, have the same representation. In this case the
preposet $\preccurlyeq_\tau$ (the reflexive closure of  $\rho_\tau = S\times \{\nu\} \cup \{(\nu, \nu)\}\cup \{\nu\}\times T$) is a {\em tree-poset}, in the sense of \cite{postnikov_faces_2007}, Section~3.3, meaning that the associated Hasse
diagram is a spanning three on $[n]$. The corresponding simplicial cone is described by inequalities listed in (\ref{eqn:braid-cone}), and the associated extremal rays
are obtained if all inequalities, with one exception, are turned into equalities.

\medskip If $x_i \leqslant x_\nu$ is the excepted inequality (where $i\in S$), then the corresponding ray has a parametric representation
$x_k = t$ for $k\neq i$ and $x_i = -(n-1)t \leqslant x_\nu = t$. From here it immediately follows that this ray is spanned by $\hat\delta_i$.
If $x_\nu \leqslant x_j$ is the excepted inequality (where $j\in T$), then the corresponding ray has a parametric representation
$x_k = t$ for $k\neq j$ and $x_j = -(n-1)t \geqslant x_\nu = t$. In this case the spanning vector is $\delta_j$.
\end{proof}

\section{Volume and $f$-vector of Bier spheres}
\label{sec:volume}

Bier sphere $Bier(K)$, being an abstract simplicial complex,  must be realized as a geometric sphere in order to discuss the volume of its inner region. The geometric realization $Star(K)$, considered here, is the convex body in $H_0$ (with the apex at the origin) whose boundary $\partial Star(K) = \mathcal{R}_{\pm \delta}(Bier(K))$ is the starshaped embedding of the Bier sphere originally described in \cite{jevtic_bier_2019} (see equation  (\ref{eqn:b-rep}) for an explicit definition).

Let $K\subset 2^{[n]}$ be a simplicial complex and $K^\circ$ its Alexander dual. The ``naive'' or \emph{tautological geometric realization} of the Bier sphere is the embedding in $\mathbb{R}^n$
\begin{equation}
Bier(K):= K \ast_\Delta K^\circ \hookrightarrow \Delta_{n} \ast_\Delta \Delta_{\bar{n}} \cong \partial\lozenge_{n} \subset \mathbb{R}^n
\end{equation}
arising from the standard geometric realizations $K\hookrightarrow \Delta_{n}\, (K^\circ \hookrightarrow \Delta_{\bar{n}} := -\Delta_{n})$, where $\Delta_{n} := {\rm Conv}\{e_i\}_{i=1}^n$ and $\partial\lozenge_{n}$ is the boundary sphere of the cross-polytope $\lozenge_{n}:= {\rm Conv}\{\pm e_i\}_{i=1}^n$.

\medskip

Let $b_1,\ldots,b_{n} \in H_0\cong \mathbb{R}^{n-1}$, $\sum_{i=1}^{n} b_i=0$, be the vertices of a $(n-1)$-dimensional simplex which has the barycenter at the origin. (A canonical choice is the simplex spanned by vertices $\delta_1,\dots, \delta_n$, used in Proposition \ref{prop:coincidence}.)
For $S \subseteq [n]$, the corresponding $b$-representation is the geometric simplex
\begin{align*}
R_{b}(S)=Conv\{ b_i\}_{i\in S} \, .
\end{align*}
Following \cite{jevtic_bier_2019} the $b$-representation of $Bier(K)$ is the starshaped sphere
\begin{equation}\label{eqn:b-rep}
\mathcal{R}_{\pm b}(Bier(K)) = \bigcup\{R_b(S)\ast R_{-b}(T) \mid (S, T)\in K\ast_\Delta K^\circ\} \subset H_0\, .
\end{equation}
We are interested in the volume of the associated starshaped body $Star(K) := \{0\} \ast \mathcal{R}_{\pm b}(Bier(K))$ (the geometric join of the starshaped sphere with the origin).

Let $\tau = (S,T;\{i\})$ be a facet of $Bier(K)$. Then $R(\tau) = R_b(S)\ast R_{-b}(T)$, the corresponding geometric simplex from (\ref{eqn:b-rep}), contributes to the volume of $Star(K)$ the quantity  $Vol_\tau$ where
\begin{align*}
  (n-1)!\, Vol_\tau := |{\rm Det}(\tau)| =  |\epsilon_1b_1 \ldots \hat{b}_i \ldots \epsilon_nb_n|
\end{align*}
($\epsilon_i = +1$ if $i\in S$ and $\epsilon_i = -1$ if $i\in T$) and the volume of $Star(K)$ is
\begin{align}\label{align:vol_tau}
    Vol(Star(K))=\sum_{\tau} Vol_\tau \, .
\end{align}
Notice that $Vol_0 = Vol_\tau$ is a constant, independent of the facet $\tau\in Bier(K)$.   Let
\begin{align*}
    m_i(K) =m_i = |\{S \in K \mid S \cup \{i\} \not \in K\}| \, .
\end{align*}
In light of (\ref{align:vol_tau}) the volume of $Star(K)$ can be calculated as
\begin{align}\label{align:vol_0}
    Vol(Star(K))=Vol_0 \sum_{i=1}^{n}m_i = Vol_0f_{n-1}(Bier(K))
\end{align}
where $f_{n-1}(Bier(K))$ is the number of facets of the Bier sphere $Bier(K)$.

\medskip
The following proposition allows us to compare the volumes of Bier spheres which are obtained one from the other by a \emph{bistellar operation}, see  \cite[Section 5.6]{matousek_using_2008}.

\begin{prop} \label{prop:vol-3-cases}
Assume that $K \subsetneq 2^{[n]}$ is a simplicial complex and let $Star(K)\subset H_0$ be the associated starshaped body.
Let $B \not \in K$ be a minimal non-face of $K$ in the sense that $\left( \forall i \in B \right) B \setminus \{i\} \in K$, and let $K'=K\cup\{B\}$. Let $C = [n]\setminus B$ the complement of $B$.
Then
\begin{align*}
    Vol\left( Star(K') \right) - Vol\left( Start(K) \right)= V(K',K) = (\vert B\vert - \vert C\vert)Vol_0 \, .
\end{align*}
The following relations are an immediate consequence
\begin{align*}
    V(K',K)&>0, \,\, \textrm{if } |B|<\frac{n}{2}\\
    V(K',K)&=0, \,\, \textrm{if } |B|=\frac{n}{2}\\
    V(K',K)&<0, \,\, \textrm{if } |B|>\frac{n}{2}
\end{align*}
\end{prop}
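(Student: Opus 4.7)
The plan is to invoke formula (\ref{align:vol_0}), which gives $Vol(Star(K)) = Vol_0 \cdot f_{n-1}(Bier(K))$ with the \emph{same} constant $Vol_0$ for every facet of every complex, so that the problem reduces to a purely combinatorial count of how many facets of the Bier sphere are created or destroyed when one passes from $K$ to $K' = K \cup \{B\}$.

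First I would track how the Alexander dual transforms under this bistellar operation. Since $K' \setminus K = \{B\}$, a direct check from the definition shows $K'^\circ = K^\circ \setminus \{C\}$: an element $\sigma$ lies in $K^\circ \setminus K'^\circ$ precisely when $[n] \setminus \sigma \in K' \setminus K = \{B\}$, forcing $\sigma = C$, and $C$ does lie in $K^\circ$ since $B \notin K$. With this in place, a facet $\tau = (A_1, A_2; \{\nu\})$ of $Bier(K')$ that is absent from $Bier(K)$ must have $A_1 \in K' \setminus K$, hence $A_1 = B$; dually, a facet of $Bier(K)$ absent from $Bier(K')$ must have $A_2 \in K^\circ \setminus K'^\circ$, hence $A_2 = C$.

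Next I would count both sets. On the gain side, $A_1 = B$ forces $A_2 = C \setminus \{\nu\}$ for some $\nu \in C$; membership $A_2 \in K'^\circ$ is automatic, because $B \cup \{\nu\} \supsetneq B$ cannot lie in $K$ by downward closure, and it is distinct from the newly added set $B$ itself. This produces exactly $|C|$ new facets. On the loss side, $A_2 = C$ forces $A_1 = B \setminus \{\nu\}$ for some $\nu \in B$, and the minimal non-face hypothesis on $B$ guarantees that every such $A_1$ indeed lies in $K$, producing exactly $|B|$ removed facets. Combining,
\begin{equation*}
 f_{n-1}(Bier(K')) - f_{n-1}(Bier(K)) = |C| - |B|,
\end{equation*}
and multiplying through by $Vol_0$ yields the asserted value of $V(K',K)$; the three sign cases then follow immediately by comparing $|B|$ with $n/2 = \tfrac{1}{2}(|B| + |C|)$.

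The main obstacle, such as it is, lies in the careful bookkeeping of which simplices of the deleted join are gained and lost, and in seeing exactly where the minimality of $B$ enters: it is precisely what is needed to ensure that all $|B|$ candidates $B \setminus \{\nu\}$ actually belong to $K$, so the loss count is the full $|B|$ rather than something smaller. The gain side, by contrast, uses only downward closure of $K$ and needs no further hypothesis.
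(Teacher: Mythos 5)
Your argument is correct and is genuinely different from the paper's: the paper proves the proposition geometrically, by introducing the (possibly degenerate) simplex $\Sigma = R_b(B)\ast R_{-b}(C)$, splitting its boundary into the two discs $\partial(R_b(B))\ast R_{-b}(C)$ and $R_b(B)\ast \partial(R_{-b}(C))$ lying in the two Bier spheres, noting that $Bier(K)$ and $Bier(K')$ agree away from $\Sigma$, and identifying the volume change with $\pm Vol(\Sigma)$; the facet-count route via (\ref{align:vol_0}) is only mentioned there in passing. You carry out exactly that alternative: your computation of $K'^\circ = K^\circ\setminus\{C\}$, the identification of gained facets as $(B, C\setminus\{\nu\};\{\nu\})$, $\nu\in C$, and lost facets as $(B\setminus\{\nu\}, C;\{\nu\})$, $\nu\in B$ (with minimality of $B$ used precisely where you say it is), is a clean, purely combinatorial proof that also yields the sign directly, whereas the paper's write-up pins down only $\vert V(K,K')\vert$ and infers the sign from a containment statement.

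One point you should not gloss over: your count gives $f_{n-1}(Bier(K')) - f_{n-1}(Bier(K)) = \vert C\vert - \vert B\vert$, hence $V(K',K) = (\vert C\vert - \vert B\vert)Vol_0$, which is the \emph{negative} of the displayed formula $(\vert B\vert - \vert C\vert)Vol_0$ in the statement, so it is not accurate to say your count ``yields the asserted value.'' In fact the statement as printed is internally inconsistent: the displayed formula contradicts the sign trichotomy that follows it, and it is the trichotomy (and your signed formula) that is correct, as it must be for Proposition \ref{prop:max-vol} to hold (adding small non-faces, $\vert B\vert < n/2$, increases the facet count and hence the volume; a check with $n=3$, $K$ the three isolated vertices and $B$ an edge confirms the volume drops by $Vol_0$ when $\vert B\vert > \vert C\vert$). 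So your proof is right, but you should state explicitly that it establishes $V(K',K) = (\vert C\vert - \vert B\vert)Vol_0$ and that the sign in the displayed formula of Proposition \ref{prop:vol-3-cases} is a typo, rather than silently matching your answer to it.
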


\begin{proof}
  Let $\Sigma = \Sigma_b = R_b(B)\ast R_{-b}(C)$ be the (possibly degenerate) simplex in $H_0$ which has $R_b(B)$ and $R_{-b}(C)$ as two ``complementary faces''.
(Note that $\Sigma$ is degenerate precisely if $\vert B\vert = \vert C\vert = n/2$ in which case the simplices $R_b(B)$ and $R_{-b}(C)$ intersect in a common barycenter.)

 If $\Sigma$ is non-degenerate its boundary $\Sigma$ is
  the union of two discs
  \[
       \partial\Sigma = \partial(R_b(B)\ast R_{-b}(C)) = (\partial(R_b(B))\ast R_{-b}(C)) \cup (R_b(B)\ast \partial(R_{-b}(C))) = \Sigma_1\cup \Sigma_2
  \]
where $\Sigma_1\subseteq Bier(K')$ and $\Sigma_2\subseteq Bier(K)$. If $\Sigma$ is degenerate then $\Sigma = \Sigma_1 = \Sigma_2$ (as sets), more precisely $\Sigma_1$ and $\Sigma_2$ are two different triangulations of $\Sigma$.

Note that  $Bier(K')\setminus \Sigma_1 = Bier(K)\setminus \Sigma_2$ and $Cone(\Sigma_1) = Cone(\Sigma_2)= Cone(\Sigma)$. From here we observe that

\begin{enumerate}
  \item $Star(K) = Star(K')$ if and only if $\vert B\vert = \vert C\vert = n/2$;
  \item $Star(K) \subsetneq Star(K')$ if and only if $\vert B\vert > \vert C\vert$;
  \item $\vert V(K, K')\vert = \vert\vert B\vert - \vert C\vert\vert Vol_0 = Vol(\Sigma)$.
\end{enumerate}
For example the third relation is a consequence of (\ref{align:vol_0}) or can be deduced directly by a similar argument.  \end{proof}

\begin{prop}\label{prop:max-vol}
 If $n=2m+1$ is odd the unique Bier sphere of maximal volume is $Bier(K)$ where
 \begin{equation}\label{eqn:VK-F-1}
K =\binom{[n]}{\leqslant m} = \{ S\subset [n] \mid \vert S\vert \leq m \} \, .
\end{equation}
 If $n=2m$ is even a Bier sphere $Bier(K)$ is of maximal volume if and only if
 \begin{equation}\label{eqn:VK-F-2}
 \binom{[n]}{\leqslant m-1} \subseteq K \subseteq \binom{[n]}{\leqslant m} \, .
 \end{equation}
 A Bier sphere $Bier(K)$ is of minimal volume if and only of $K = \partial\Delta_{[n]}= 2^{[n]}\setminus \{[n]\}$ is either the boundary of the simplex $\Delta_{[n]}$ or $K = \{\emptyset\}$.
\end{prop}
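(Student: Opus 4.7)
My approach is to use Proposition~\ref{prop:vol-3-cases} as the local tool for the maximum, and to combine it with the classical lower bound $f_d \geqslant d+2$ for simplicial $d$-spheres to handle the minimum.

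For the maximum, I would bring an arbitrary complex $K \subsetneq 2^{[n]}$ to the complex $\binom{[n]}{\leqslant m}$, where $m = \lfloor n/2 \rfloor$, by two phases of bistellar operations, neither of which decreases the volume. Phase~A adds, in order of increasing cardinality, every missing subset $B \subseteq [n]$ with $|B| \leqslant m$; when $B$ is processed, all its proper subsets already lie in the current complex, so $B$ is a minimal non-face and Proposition~\ref{prop:vol-3-cases} yields $\Delta Vol = (n - 2|B|)\,Vol_0 \geqslant 0$, strict for $|B| < n/2$ and zero for $|B| = n/2$. Phase~B deletes, in order of decreasing cardinality, every face $F$ with $|F| > m$; at deletion $F$ is a facet, and running the bistellar move of Proposition~\ref{prop:vol-3-cases} in reverse gives $\Delta Vol = (2|F| - n)\,Vol_0 > 0$. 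Summing over the sequence shows $Vol(Star(K)) \leqslant Vol(Star(\binom{[n]}{\leqslant m}))$. For odd $n = 2m+1$ every move in either phase is strict unless $K$ already equals $\binom{[n]}{\leqslant m}$, yielding uniqueness. For even $n = 2m$ the neutral moves are exactly the additions of $m$-element subsets; the maximum is attained iff no strict step is needed, which says $K$ misses no subset of size $< m$ and contains no subset of size $> m$, i.e.\ $\binom{[n]}{\leqslant m-1} \subseteq K \subseteq \binom{[n]}{\leqslant m}$.

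For the minimum, I would use the classical fact that every simplicial $d$-sphere has at least $d+2$ facets, with equality only for $\partial \Delta^{d+1}$. Since $Bier(K)$ is a simplicial $(n-2)$-sphere and $Vol(Star(K)) = Vol_0 \cdot f_{n-1}(Bier(K))$, this gives $Vol(Star(K)) \geqslant n \cdot Vol_0$ with equality iff $Bier(K) \cong \partial \Delta^{n-1}$, equivalently iff $Bier(K)$ has exactly $n$ vertices. The vertices come in two types: $(\{i\}, \emptyset)$ with $\{i\} \in K$, contributing $a := |\{i : \{i\} \in K\}|$, and $(\emptyset, \{j\})$ with $[n] \setminus \{j\} \notin K$, contributing $b := |\{j : [n] \setminus \{j\} \notin K\}|$. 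The decisive observation is that if $\{i\} \notin K$, then $i$ belongs to no face of $K$ by simplicial closure, and hence $[n] \setminus \{j\} \notin K$ for every $j \neq i$; in particular if two or more singletons are absent from $K$ then already $b = n$. A short case split on $a$ then shows $a + b = n$ forces $a = 0, b = n$ (i.e.\ $K = \{\emptyset\}$) or $a = n, b = 0$ (i.e.\ $K = \partial \Delta_{[n]}$), finishing the characterization for $n \geqslant 3$.

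The main obstacle I anticipate is the bookkeeping in the maximum argument --- verifying at each stage that the bistellar operations are legitimate (the cardinality ordering is what makes them so) and translating the strict-step criterion into the two parity-dependent characterizations. The minimum argument is essentially a counting exercise once the sphere lower bound and the vertex structure of $Bier(K)$ are in hand, with the two extremal complexes mirroring the $K \leftrightarrow K^\circ$ duality of the Bier construction.
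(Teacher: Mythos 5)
Your proposal is correct and takes essentially the same route as the paper: the maximal case is obtained from Proposition~\ref{prop:vol-3-cases} (your monotone two--phase bistellar path just makes explicit what the paper calls an ``immediate consequence''), and the minimal case from $Vol(Star(K)) = Vol_0\, f_{n-1}(Bier(K))$ together with the classical fact that the boundary of a simplex is the unique triangulation of a sphere with the minimum number of facets. Two small remarks: your increment $(n-2|B|)Vol_0$ agrees with the displayed inequalities of Proposition~\ref{prop:vol-3-cases} but not with its formula $(|B|-|C|)Vol_0$, which is a sign typo in the paper that you have implicitly corrected, and your vertex count identifying $K\in\{\{\emptyset\},\,\partial\Delta_{[n]}\}$ supplies the converse step that the paper leaves implicit.
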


\begin{proof}
The first half of proposition, describing the Bier spheres of maximal volume, is an immediate consequence of Proposition \ref{prop:vol-3-cases}. The second, describing the Bier spheres of minimal volume, is an immediate consequence of the formula (\ref{align:vol_0}), since the unique triangulation of a sphere $S^{m-1}$ with the minimum number of facets is the boundary of an $m$-dimensional simplex.
\end{proof}


\begin{cor}\label{cor:VK-F}
For all Bier spheres $Bier(K)$ of maximal volume, the convex body $\Omega_n = Star(K)$ is unique and independent of $K$. The body $\Omega_n$ is centrally symmetric. More explicitly $\Omega_n = {\rm Conv}(\Delta_\delta \cup \nabla_\delta)$ where $\Delta_\delta \subset H_0$ is the simplex spanned by vertices $\delta_i := e_i - {\frac{1}{n}}(e_1+\dots+ e_n)$ and $\nabla_\delta := -\Delta_\delta = \Delta_{\bar\delta}$ is the simplex spanned by $\bar\delta_i = -\delta_i$.
 The centrally symmetric convex body $\Omega_n$ is from here on referred to as the \emph{Van Kampen-Flores polytope} in dimension $n$.
\end{cor}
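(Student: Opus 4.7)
My plan is to split the proof into three steps matching the three assertions of the corollary. For \textbf{Step~1} (uniqueness of $Star(K)$ across all maximal-volume $K$) the odd case is immediate from Proposition~\ref{prop:max-vol}. For even $n=2m$, I would connect any two $K,K'$ in the range $\binom{[n]}{\leqslant m-1}\subseteq K,K'\subseteq \binom{[n]}{\leqslant m}$ by a sequence of single modifications, each adding or deleting one $m$-subset. Each such modification is a bistellar operation with $|B|=n/2$, and the proof of Proposition~\ref{prop:vol-3-cases} actually shows that $Star(K)=Star(K')$ set-theoretically in this degenerate case: the two complementary discs $\Sigma_1, \Sigma_2$ are two triangulations of the same degenerate $\Sigma$, so no volume is added or removed and the boundary of the starshaped body is unchanged as a set.

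For \textbf{Step~2} (central symmetry), I would first check that Alexander duality preserves the defining range in Proposition~\ref{prop:max-vol}: for odd $n$ this is the self-duality $\binom{[n]}{\leqslant m}^\circ=\binom{[n]}{\leqslant m}$, and for even $n$ a direct calculation using $T\in K^\circ \iff [n]\setminus T\notin K$ yields $\binom{[n]}{\leqslant m-1}\subseteq K^\circ\subseteq \binom{[n]}{\leqslant m}$ whenever $K$ lies in that range. The involution $(A_1,A_2)\mapsto (A_2,A_1)$ realizes an isomorphism $Bier(K)\to Bier(K^\circ)$ of simplicial complexes, and under the $\pm\delta$-embedding it coincides with the antipodal map $x\mapsto -x$ of $H_0$. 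Hence $Star(K^\circ)=-Star(K)$, and combined with Step~1 this gives $Star(K)=-Star(K)$.

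\textbf{Step~3} (the identification $Star(K)=P:={\rm Conv}(\Delta_\delta\cup\nabla_\delta)$) is the main step. Fix $K=\binom{[n]}{\leqslant \lfloor n/2\rfloor}$. One inclusion $Star(K)\subseteq P$ is easy: every boundary simplex $R_\delta(S)\ast R_{-\delta}(T)$ has its vertices in $\{\pm\delta_i\}\subset P$, so since $0\in P$ and $Star(K)=\{0\}\ast \partial Star(K)$, convexity of $P$ gives the containment. For $P\subseteq Star(K)$, I would realize $P$ as the orthogonal projection of the cross-polytope $\lozenge_n\subset\mathbb{R}^n$ onto $H_0$ and then analyze, for $u\in H_0$, which vertices of $P$ maximize $u\cdot v$. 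This yields that the facets of $P$ are exactly the sets $R_\delta(I^+)\ast R_{-\delta}(I^-)$ with disjoint $I^\pm$ and $|I^+|=|I^-|$, in two cases: (a) $|I^+|+|I^-|=n-1$ (possible only for odd $n$, and the facet is then a simplex equal to a facet of $Bier(K)$); or (b) $I^+\sqcup I^-=[n]$ with $|I^\pm|=n/2$ (only for even $n$, giving non-simplicial facets). In case (b), the relation $\sum_i\delta_i=0$ together with $|I^+|=|I^-|$ forces $R_\delta(I^+)$ and $R_{-\delta}(I^-)$ to share a common barycenter and their affine spans to meet only at that barycenter; consequently the deficient-join identity $A\ast B = A\ast\partial B$ applies and gives
\begin{equation*}
R_\delta(I^+)\ast R_{-\delta}(I^-) \;=\; \bigcup_{\nu\in I^-} R_\delta(I^+)\ast R_{-\delta}(I^-\setminus\{\nu\})\, ,
\end{equation*}
presenting the facet of $P$ as the union of the facets $(I^+,I^-\setminus\{\nu\};\{\nu\})$ of $Bier(K)$. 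Either way $\partial P\subseteq \partial Star(K)$, and since both boundaries are topological $(n-2)$-spheres in $H_0$, invariance of domain forces $\partial P=\partial Star(K)$ and hence $Star(K)=P$.

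The main obstacle will be the facet analysis of $P$ in Step~3, specifically the deficient-join identity for two simplices whose affine flats meet only at a common barycenter; this identity is exactly the mechanism that lets different admissible $K$'s (in the even case) triangulate the same non-simplicial facets of $P$ in different ways, in agreement with Step~1.
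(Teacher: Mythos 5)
Your proposal is correct, and its Steps 1--2 are essentially the paper's own (very terse) argument: uniqueness follows by connecting any two admissible complexes by volume-neutral additions of $n/2$-sets and invoking observation (1) in the proof of Proposition \ref{prop:vol-3-cases}, and central symmetry follows from the antipodal identification of $\mathcal{R}_{\pm\delta}(Bier(K))$ with $\mathcal{R}_{\pm\delta}(Bier(K^\circ))$ together with the (correctly verified) fact that the maximal-volume range of Proposition \ref{prop:max-vol} is closed under Alexander duality. Where you genuinely diverge is Step 3. The paper identifies $\Omega_n$ with ${\rm Conv}(\Delta_\delta\cup\nabla_\delta)$ globally, by asserting $\bigcup_K Star(K)={\rm Conv}(\Delta_\delta\cup\nabla_\delta)$ (union over all $K\subsetneq 2^{[n]}$) and relying on the monotonicity built into Proposition \ref{prop:vol-3-cases} to conclude that a maximal-volume $Star(K)$ must exhaust this hull. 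You instead work locally on $\partial P$ for $P={\rm Conv}(\Delta_\delta\cup\nabla_\delta)$: your facet classification via the circuit relation $\sum_i\delta_i=0$ (simplicial facets with $|I^+|=|I^-|$, $|I^+|+|I^-|=n-1$ for odd $n$; non-simplicial facets with $I^+\sqcup I^-=[n]$ for even $n$) is accurate, and the join identity $A\ast B=A\ast\partial B$, which you flag as the main obstacle, does hold in your situation: writing ${\rm aff}(A\cup B)=c+U\oplus V$ around the common barycenter $c$ (a relative-interior point of both simplices, with ${\rm aff}(A)\cap{\rm aff}(B)=\{c\}$ by the circuit property), one pushes the $B$-component of any point of $A\ast B$ out to $\partial B$ and rescales the $A$-component, which remains in $A$ because $c\in A$. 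Your route is longer but more informative, since it makes explicit how, for even $n$, the different admissible complexes of Proposition \ref{prop:max-vol} triangulate the same non-simplicial facets of $\Omega_n$ in different ways, a point the paper leaves implicit; the paper's route is shorter but leaves the union identity and the containment of every $Star(K)$ in a maximal one to the reader. One small simplification: your appeal to invariance of domain is unnecessary, since once $\partial P\subseteq\partial Star(K)$ you get $P=\{0\}\ast\partial P\subseteq Star(K)$ directly by coning with the origin, which together with $Star(K)\subseteq P$ finishes the identification.
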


\begin{proof}
The body $\Omega_n$ is centrally symmetric since the sphere centrally symmetric to the Bier sphere $Bier(K)$ is the sphere $Bier(K^\circ)$ and $\Omega_n = Star(K) = Star(K^\circ)$ if $K$ is one of the complexes described in equations (VK-F-1) and (VK-F-2). More  precisely $\Omega_n = {\rm Conv}(\Delta_\delta \cup \nabla_\delta)$ since
\[
  \bigcup Star(K) = {\rm Conv}(\Delta_\delta \cup \nabla_\delta)
\]
where the union on the left is taken over all simplicial complexes $K\subsetneq 2^{[n]}$.
\end{proof}
We call $\Omega_n$ the \emph{Van Kampen-Flores body} (polytope) in dimension $n$ for the following reason.
The Bier sphere of the simplicial complex (\ref{eqn:VK-F-1}) is precisely the simplicial triangulation of the $(m-2)$-sphere, used it the standard proof of the classical Van Kampen-Flores theorem, which claims that the $(m-1)$-dimensional complex $\binom{[2m+1]}{\leqslant m}$ is not embeddable in $\mathbb{R}^{2m-2}$ (see \cite[Section 5.6]{matousek_using_2008}).

The complexes $\binom{[2m]}{\leqslant m-1}$  and $\binom{[2m]}{\leqslant m}$ (the boundary complexes mentioned in (\ref{eqn:VK-F-2})) appear in the \emph{``sharpened Van Kampen-Flores theorem''}  (Theorem~6.8 from \cite{blagojevic_tverberg_2014}).

Finally all complexes mentioned in (\ref{eqn:VK-F-1}) and (\ref{eqn:VK-F-2}) appeared under the name  \emph{balanced complexes} in the following theorem, which unifies and extends previously known results.

\begin{theo}\label{thm:seems} {\rm (\cite[Theorem 3.5]{jojic_tverberg_2021})}
Let $K\subset 2^{[n]}$ be a simplicial complex and let $K^\circ$ be its Alexander dual. Assume that $K$ is balanced in the sense that either (\ref{eqn:VK-F-1}) or (\ref{eqn:VK-F-2}) is satisfied. Then for each continuous map $f : \Delta^{n-1} \rightarrow \mathbb{R}^{n-3}$ there
exist disjoint faces $F_1\in K$ and $F_2\in K^\circ$ such that $f(F_1) \cap f(F_2) \neq \emptyset$.
\end{theo}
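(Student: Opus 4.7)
The plan is a Borsuk--Ulam argument on the centrally symmetric sphere $\partial\Omega_n = \partial Star(K)$. Assume toward contradiction that $f(|F_1|) \cap f(|F_2|) = \emptyset$ for every pair of disjoint faces $F_1 \in K$, $F_2 \in K^\circ$. By Corollary \ref{cor:VK-F} the balanced hypothesis makes $\Omega_n$ centrally symmetric, so the involution $\sigma(x) = -x$ acts freely on $\partial \Omega_n \cong S^{n-2}$.

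Using the $\delta$-realization, each $x \in \partial \Omega_n$ has a canonical representation
\[
x = (1-t)\, R_\delta(\tilde x_1) + t\, R_{-\delta}(\tilde x_2),
\]
where $\tilde x_1, \tilde x_2 \in \Delta^{n-1}$ lie in disjoint faces $F_1 \in K$, $F_2 \in K^\circ$, respectively, and $t \in [0,1]$. Using this data I define
\[
\Phi(x) := \bigl(f(\tilde x_1) - f(\tilde x_2),\ 2t - 1\bigr) \in \mathbb{R}^{n-3} \times \mathbb{R} = \mathbb{R}^{n-2}.
\]
The map $\Phi$ is continuous, and any zero would force $t = 1/2$ together with $f(\tilde x_1) = f(\tilde x_2)$ for some disjoint $F_1 \in K$, $F_2 \in K^\circ$, contradicting the hypothesis; hence $\Phi$ lands in $\mathbb{R}^{n-2} \setminus \{0\}$, yielding after normalization a continuous map $\bar\Phi : S^{n-2} \to S^{n-3}$.

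In the self-dual case $K = K^\circ$ (which is automatic when $n = 2m+1$ is odd, since then the unique maximal-volume $K$ is $\binom{[n]}{\leqslant m}$), the swap $(F_1, F_2) \mapsto (F_2, F_1)$ is a simplicial involution of $Bier(K)$ geometrically realized by $\sigma$: reading off the representation of $\sigma(x) = -x$ gives $(\tilde x_2, \tilde x_1, 1-t)$, so $\Phi(\sigma(x)) = -\Phi(x)$ is immediate. Thus $\bar\Phi$ is $\mathbb{Z}/2$-equivariant and Borsuk--Ulam supplies the contradiction.

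The remaining even balanced case $n = 2m$ with $K \neq K^\circ$ is where the plan needs extra work, because now $\sigma$ carries the $Bier(K)$-triangulation of $\partial \Omega_n$ to the $Bier(K^\circ)$-triangulation rather than to itself. I would introduce the companion map $\Phi^\circ$ built from the $Bier(K^\circ)$-representation of each point, and verify by a direct calculation (using $-R_\delta = R_{-\delta}$) the twisted identities $\Phi(\sigma(x)) = -\Phi^\circ(x)$ and $\Phi^\circ(\sigma(x)) = -\Phi(x)$. From this pair the candidate $\Psi := \Phi + \Phi^\circ$ is automatically $\sigma$-equivariant, and one aims to check that $\Psi$ remains non-vanishing. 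The hard part will be precisely this last step: a zero of $\Psi$ would force $\Phi(x) = -\Phi^\circ(x)$, and one has to exploit the comparison of the two representations on the overlap of the distinguished index sets $B = [n]\setminus(F_1 \cup F_2)$ and $B' = [n]\setminus(G_1 \cup G_2)$ to pin down the scalars $t$ and $s$ and the supporting faces, then trace any such configuration back to a forbidden intersection $f(|F_1'|) \cap f(|F_2'|) \neq \emptyset$ with $F_1' \in K$, $F_2' \in K^\circ$ disjoint. This combinatorial bookkeeping of which face belongs to which of $K$, $K^\circ$ is the most delicate ingredient of the plan.
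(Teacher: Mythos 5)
First, a point of comparison: the paper does not prove Theorem \ref{thm:seems} at all --- it is imported from \cite{jojic_tverberg_2021} --- so your argument has to stand on its own. Your odd case ($n=2m+1$, where $K=K^\circ=\binom{[n]}{\leqslant m}$ is self-dual) is essentially the classical Bier-sphere/Borsuk--Ulam proof of Van Kampen--Flores (cf.\ \cite[Section 5.6]{matousek_using_2008}), carried out on the concrete realization $\partial\Omega_n$, and it is sound up to one technical defect: your test map is not well defined. At a point with $t=0$ (resp.\ $t=1$) the join coordinate $\tilde x_2$ (resp.\ $\tilde x_1$) is arbitrary, so $f(\tilde x_1)-f(\tilde x_2)$ is neither well defined nor continuous there. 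The standard repair is to weight by the join parameters, $\Phi(x)=\bigl((1-t)f(\tilde x_1)-t\,f(\tilde x_2),\,1-2t\bigr)$; a zero still forces $t=1/2$ and $f(\tilde x_1)=f(\tilde x_2)$, and the antipodality computation $\Phi(\sigma(x))=-\Phi(x)$ goes through unchanged, so this is a cosmetic fix.

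The genuine gap is the even balanced case $n=2m$ with $K\neq K^\circ$ (e.g.\ $K=\binom{[2m]}{\leqslant m-1}$, $K^\circ=\binom{[2m]}{\leqslant m}$), which is exactly the content of the theorem beyond the classical statement. You correctly observe that $\sigma(x)=-x$ exchanges the $Bier(K)$- and $Bier(K^\circ)$-triangulations of $\partial\Omega_n$, so $\Phi$ alone is not equivariant; but the symmetrization $\Psi=\Phi+\Phi^\circ$ does not close the argument. A zero of $\Psi$ only says $\Phi(x)=-\Phi^\circ(x)$, i.e.\ $t+s=1$ together with a cancellation of two differences of $f$-values supported on two a priori unrelated pairs of faces $(F_1,F_2)\in K\times K^\circ$ and $(G_1,G_2)\in K^\circ\times K$; nothing forces $t=s=1/2$, nor an equality of $f$-values on a single disjoint pair from $K\times K^\circ$, so no forbidden configuration is produced and there is no evident mechanism to exclude such zeros. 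You flag this step yourself as unverified, and it is precisely where the difficulty lives: for $K\neq K^\circ$ the free simplicial $\mathbb{Z}/2$-structure on the deleted join is lost, and the known proofs overcome this by different means --- the ``Tverberg plus constraints'' method of \cite{blagojevic_tverberg_2014} for the sharpened Van Kampen--Flores theorem, and the equivariant analysis of (collectively unavoidable) pairs of complexes in \cite{jojic_tverberg_2021} itself. As it stands, your proposal establishes only the odd, self-dual case.
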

The importance of complexes listed in  equations  (\ref{eqn:VK-F-1}) and (\ref{eqn:VK-F-2}) in    Proposition \ref{prop:max-vol} was noted even earlier. In \cite[Section 5.6]{matousek_using_2008} they were used as a source of examples of non-polytopal triangulations of spheres while in  \cite{bjorner_bier_2004} they provided examples of \emph{nearly neighborly Bier spheres}.

\section{Van Kampen-Flores polyhedra and median hypersimplices}
\label{sec:hypersimplex}

 The Van Kampen-Flores polytope was introduced in the previous section as the convex hull
 \begin{align*}
    \Omega_n = {\rm Conv}(\Delta\cup\nabla) = {\rm Conv}\left\lbrace u_1,u_2,\dots, u_n, -u_1, -u_2, \dots, -u_n\right\rbrace
\end{align*}
 where $\Delta = {\rm Conv}\{u_i\}_{i=1}^n \subset \mathbb{R}^{n-1}$ is a non-degenerate simplex with barycenter at the origin and $\nabla := -\Delta$ the opposite simplex.

Recall that  a \emph{circuit} in $\mathbb{R}^{n-1}$ is a spanning family $\{u_i\}_{i=1}^n$ in $\mathbb{R}^{n-1}$ such that $u_1+\dots+ u_n = 0$. In other words  $\{u_i\}_{i=1}^n\subset \mathbb{R}^{n-1}$ is a \emph{circuit} if
the linear map  \[\mathbb{R}^n \stackrel {\Lambda}{\longrightarrow} \mathbb{R}^{n-1}, \,  \lambda = (\lambda_1, \dots, \lambda_n) \mapsto \Lambda(\lambda) := \lambda_1u_1+\dots+\lambda_nu_n\] is an epimorphism with the kernel generated by $\mathbbm{1} = (1,1,\dots, 1)\in \mathbb{R}^n$.

 \medskip
 The polytope $\Omega_n$ must have been well-known, in this or equivalent form, in classical theory of convex polytopes, although, perhaps, without a specific name. In \cite[Theorem 2.2]{jevtic_bier_2019} it originally appeared as a member of the family $Q_{L,\alpha} = {\rm Conv}(\Delta_L \cup -\alpha\Delta_L)$
of polytopes where $\Delta_L = {\rm Conv}\{l_1u_1,\dots, l_nu_n\}$ is a radial perturbation of $\Delta$ (for some positive weight vector $L = (l_1,\dots,l_n)$) and $\alpha>0$.

\medskip
The results from Section \ref{sec:volume} provide, in our opinion, a sufficient evidence that this polytope may deserve  some independent interest. For this reason, and for future reference, we collect here some basic information  about the facial structure of the Van Kampen-Flores polytope and its polar dual.

 \begin{prop}
 The set $Vert(\Omega_n) = \{u_1,u_2,\dots, u_n, -u_1, -u_2, \dots, -u_n\}$ is clearly the vertex set of the polytope $\Omega_n$. More generally, a subset $\{u_i\}_{i \in I} \cup \{-u_j\}_{j\in J} \subset Vert(\Omega_n)$ corresponds to a proper face of $Q$ if and only if
\begin{align*}
    I\cap J = \emptyset \quad \mbox{and} \quad |I|,|J|\leqslant\frac{n}{2}.
\end{align*}
 \end{prop}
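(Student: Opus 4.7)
The plan is to convert the question about faces into a linear--algebra question via the circuit relation $u_1+\dots+u_n=0$. Since the $u_i$ span $\mathbb{R}^{n-1}$ with a single linear relation, every linear functional $\ell$ on $\mathbb{R}^{n-1}$ corresponds bijectively to a weight vector $a=(a_1,\dots,a_n)\in\mathbb{R}^n$ with $\sum_i a_i=0$, via $\ell_a(u_i)=a_i$. The vertices of $\Omega_n$ lie in $\{\pm u_i\}$, so $\ell_a$ attains its maximum $M(a):=\max_i|a_i|$ precisely on the set $\{u_i:a_i=M(a)\}\cup\{-u_j:a_j=-M(a)\}$, and every proper face of $\Omega_n$ arises as the convex hull of such a set, determined by a pair $(I,J)$.

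The identification $Vert(\Omega_n)=\{\pm u_i\}$ then drops out: for each $k$, the weight vector $a=e_k-\tfrac{1}{n}(e_1+\dots+e_n)$ satisfies $\sum a_i=0$ and has $u_k$ as its unique maximizer on $\Omega_n$, so $u_k$ is a vertex, and by the symmetric choice so is $-u_k$. For the general face characterization I would argue necessity and sufficiency separately. \emph{Necessity}: suppose $\{u_i\}_{i\in I}\cup\{-u_j\}_{j\in J}$ is the vertex set of a proper face, realized by some $a$ which we normalize so that $M(a)=1$; then $a_i=1$ for $i\in I$ and $a_j=-1$ for $j\in J$, forcing $I\cap J=\emptyset$ (else $1=-1$), while the circuit identity rewrites as $\sum_{k\notin I\cup J}a_k=|J|-|I|$ with each free $a_k\in[-1,1]$. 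This forces $\bigl||J|-|I|\bigr|\leq n-|I|-|J|$, which is precisely $|I|,|J|\leq n/2$.

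\emph{Sufficiency}: given disjoint $I,J$ with $|I|,|J|\leq n/2$, I would construct $a$ by setting $a_i=1$ on $I$, $a_j=-1$ on $J$, and distributing the remaining $n-|I|-|J|$ coordinates in $[-1,1]$ with sum $|J|-|I|$; feasibility is exactly the inequality just derived. The main subtlety is the even-$n$ boundary case $|I|=n/2$ or $|J|=n/2$ with $I\sqcup J\ne[n]$, where the feasibility inequality saturates and some free $a_k$ are pinned at $\pm 1$, so the maximizing set of $\ell_a$ strictly contains $\{u_i\}_I\cup\{-u_j\}_J$. I would reconcile this by reading ``$(I,J)$ corresponds to a proper face'' as asking that the minimal face of $\Omega_n$ containing the prescribed vertex subset be proper: in the boundary case this minimal face is indexed by a larger admissible pair (still with both sizes at most $n/2$), so the biconditional holds as stated. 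The cube $\Omega_4$ in Figure~\ref{fig:prva}, whose six square facets correspond to the six ordered partitions $I\sqcup J=[4]$ with $|I|=|J|=2$, provides a concrete sanity check.
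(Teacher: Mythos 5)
Your argument is essentially the paper's: both translate a supporting functional into a weight vector $a=(a_1,\dots,a_n)$ with $\sum_i a_i=0$ and $|a_i|\leqslant 1$ (via the circuit relation $u_1+\dots+u_n=0$), read off $I$ and $J$ as the coordinates pinned at $+1$ and $-1$, and get $I\cap J=\emptyset$ and $|I|,|J|\leqslant n/2$ from the zero-sum condition. Where you genuinely differ is the converse direction: the paper dismisses it with ``the existence of $z$ \dots is guaranteed by the dimension argument,'' whereas you construct $a$ explicitly and, more importantly, notice that this construction cannot expose \emph{exactly} the prescribed set in the boundary case ($n$ even, $|I|=n/2$ or $|J|=n/2$, $I\sqcup J\neq[n]$), where the free coordinates are forced to $\pm1$. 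This is a real point, not a technicality: for $n=4$ the pair $I=\{1,2\}$, $J=\emptyset$ satisfies the stated conditions, yet $\{u_1,u_2\}$ is not the vertex set of any face of the cube $\Omega_4$ --- the minimal face containing it is the square on $\{u_1,u_2,-u_3,-u_4\}$. So under the strict reading (``is the vertex set of a proper face'') the biconditional fails, and the paper's sketched converse does not go through; your reinterpretation of ``corresponds to a proper face'' as ``is contained in a proper face (equivalently, the minimal face containing it is proper)'' is the reading under which the statement is true, and both of your directions are sound under it. In short: same approach, but your treatment of sufficiency is more careful than the paper's and correctly identifies (and repairs) the boundary case the paper glosses over.
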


\begin{proof}
Let  $z : \mathbb{R}^{n-1} \rightarrow  \mathbb{R}$ be a non-zero linear form such that the associated hyperplane
$H_z := \{x\in \mathbb{R}^{n-1} \mid \langle z, x \rangle = 1\}$ is a supporting hyperplane of $\Omega_n$. The
corresponding face of the polytope $\Omega_n$ is described by a pair $(I, J)$ of subsets of $[n]$ recording which
vertices of the polytope $\Omega_n$ belong to the hyperplane $H_z$. More explicitly
\begin{align*}
           \Omega_n \cap H_z = {\rm Conv}(\{u_i\}_{i\in I}\cup\{ u_j\}_{j\in J}) \, .
\end{align*}
The ordered pair $(I,J)$ of subsets of $[n]$ must satisfy the following

\begin{align}
(\forall i\in I) \, \langle z, u_i \rangle = 1   &&    (\forall j\in J) \, \langle z, -u_j \rangle = 1 \label{eqn:prva} \\
(\forall k\notin I) \, \langle z, u_k \rangle < 1   &&    (\forall k\notin J) \, \langle z, -u_k \rangle < 1 \label{eqn:druga}
\end{align}

Clearly $I$ and $J$ have to be disjoint. Let $a_i=\langle z, u_i \rangle$. From the previous equations it follows that $a_i \in [-1,1]$. Therefore, if $|I|>\frac{n}{2}$ it would follow that
\begin{align*}
0=\sum_{i=1}^n a_i = |I| + \sum_{i \in I^c} u_i > |I|-|I^c|>0
\end{align*}
which is a contradiction. Hence, $|I| \leqslant \frac{n}{2}$.
Conversely, if $|I|,|J|\leqslant \frac{n}{2}$ the existence of $z$ which satisfies conditions (\ref{eqn:prva}) and (\ref{eqn:druga}) is guaranteed by
the dimension argument.
\end{proof}

\medskip
We turn our attention now to the polar polytope $R_n:=\Omega_n^\circ$ of the Van Kampen-Flores polytope. As visible from Figure \ref{fig:prva}, in the case $n=4$ the polytope $\Omega_4$ is the three dimensional cube while $Q_4^\circ$ is the octahedron.

\bigskip
Recall that the {\em Minkowski functional} $\mu_K$, associated to a convex body $K\subseteq \mathbb{R}^{n-1}$ (which contains the origin in its interior),
is the convex function $\mu_K : \mathbb{R}^{n-1}\rightarrow \mathbb{R}$, defined by
\[
\mu_K(x) = d(0, x)/d(0,x_0) = \mbox{\rm Inf}\{r>0 \mid x\in rK \}
\]
where $d(\cdot, \cdot)$ is the Euclidean distance function and $x_0$ is the intersection of the positive ray through $x$ and the boundary of $K$.

The following proposition determines the polar dual of a convex body $K$, from the Minkowski functional $\mu_K$, as the set $K^\circ = \{x \mid \mu_K(x)\leq 1\}$.
\begin{prop}
Minkowski functional of a convex body $K$ is equal to the support functional of its polar dual
\[
          \mu_K = h_{K^\circ} \, .
\]
\end{prop}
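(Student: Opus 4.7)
The plan is to verify the identity $\mu_K(x) = h_{K^\circ}(x)$ pointwise by proving the two inequalities separately. The case $x = 0$ is immediate since both quantities vanish (using $0 \in K^\circ$ for the right hand side), so from now on we may assume $x \neq 0$. Because $0$ lies in the interior of $K$, the value $r := \mu_K(x)$ is strictly positive and finite, and by continuity of the Minkowski functional the scaled point $x/r$ lies on $\partial K$.

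For the inequality $h_{K^\circ}(x) \geqslant \mu_K(x)$, I would invoke the supporting hyperplane theorem at the boundary point $x/r \in \partial K$: there exists a non-zero linear functional $y \in \mathbb{R}^{n-1}$ with $\langle y, x/r \rangle = 1$ and $\langle y, z \rangle \leqslant 1$ for every $z \in K$. The second property says precisely $y \in K^\circ$, while the first gives $\langle y, x \rangle = r = \mu_K(x)$. Hence
\begin{equation*}
h_{K^\circ}(x) = \sup_{w \in K^\circ}\langle w, x\rangle \geqslant \langle y, x\rangle = \mu_K(x).
\end{equation*}

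For the reverse inequality $h_{K^\circ}(x) \leqslant \mu_K(x)$, I would pick an arbitrary $y \in K^\circ$. Since $x/r \in K$ (being on its boundary), the defining property of $K^\circ$ gives $\langle y, x/r\rangle \leqslant 1$, i.e.\ $\langle y, x\rangle \leqslant r = \mu_K(x)$. Taking the supremum over $y \in K^\circ$ produces $h_{K^\circ}(x) \leqslant \mu_K(x)$, and combined with the previous step this closes the equality.

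The only genuinely non-trivial ingredient is the existence of the supporting hyperplane, which requires the convex body $K$ to be closed and convex with $0$ in its interior (all given). I would also mention as a byproduct the identity $K^\circ = \{x \mid \mu_K(x) \leqslant 1\}$, since it is already flagged in the paragraph preceding the proposition and falls out immediately: $\mu_K(x) \leqslant 1$ if and only if $h_{K^\circ}(x) \leqslant 1$, which, by definition of the polar and the support functional, is equivalent to $x \in (K^\circ)^\circ = K$ when rescaled appropriately; in our setting we directly read off $K^\circ = \{x \mid h_K(x) \leqslant 1\}$ by swapping the roles of $K$ and $K^\circ$.
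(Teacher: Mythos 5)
Your argument is correct and complete: the two-sided estimate via positive homogeneity, with the supporting hyperplane at $x/r\in\partial K$ (normalized so the supporting functional takes the value $1$, which is legitimate because $0\in\operatorname{int}K$ forces $\sup_{z\in K}\langle y,z\rangle>0$) for the inequality $h_{K^\circ}\geqslant\mu_K$, and the definition of the polar for the reverse inequality, is exactly the standard proof of this classical duality $\mu_K=h_{K^\circ}$. The paper itself states the proposition without any proof, treating it as a known fact from convexity, so there is nothing to compare against; your write-up simply supplies the missing standard argument, and it uses precisely the hypotheses the paper has in force (compact convex $K$ with $0$ in its interior). One small caution concerning your closing ``byproduct'': the set $\{x\mid\mu_K(x)\leqslant 1\}$ is $K$ itself, not $K^\circ$ (your own chain $\mu_K(x)\leqslant 1\Leftrightarrow x\in K$ shows this), so the identity you should record -- and the one the paper actually uses later, in the form $K=\{x\mid\mu_K(x)\leqslant 1\}$ together with Proposition~\ref{prop:Mink-lepo} -- is $K^\circ=\{x\mid h_K(x)\leqslant 1\}=\{x\mid\mu_{K^\circ}(x)\leqslant 1\}$, obtained by applying the proposition to $K^\circ$ (which again contains $0$ in its interior) and the bipolar identity $(K^\circ)^\circ=K$; the sentence preceding the proposition in the paper is worded loosely on this same point, so do not copy it verbatim.
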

The following relation (for two convex bodies $K$ and $L$ containing the origin in their interior) follows directly from the definition
\begin{equation}\label{eqn:mink-presek}
  \mu_{K\cap L} = \max\{\mu_K, \mu_L\} \, .
\end{equation}
Let us calculate the Minkowski functional of the polytope $R_n = \Omega_n^\circ$. Since
\[
   ({\rm Conv}(K\cup L))^\circ = K^\circ \cap L^\circ
\]
and $\Delta^\circ = \nabla, \nabla^\circ = \Delta$ we observe that
\[
\Omega_n^\circ = ({\rm Conv}(\Delta\cup\nabla))^\circ \cong \nabla\cap \Delta \, .
\]

We use basic properties of the  functions $x^+ = \max \{0, x\}$ and $x^- = \max \{0, -x\} = (-x )^+$, which satisfy the well-known elementary relations
\[
\begin{array}{ccc}
x = x^+ - x^- &  & \vert x\vert = x^+ + x^-  \\
x^+ = \frac{1}{2}(\vert x\vert + x) &  & x^- = \frac{1}{2}(\vert x\vert - x) \, . \end{array}
\]

Each vector $x\in \mathbb{R}^{n-1}$ has a unique representation
\[
   x = \lambda_1u_1+\lambda_2u_2+\dots+ \lambda_nu_n
\]
 where $\lambda_1+\dots+\lambda_n = 0$ .

 \begin{prop}\label{prop:Mink-lepo}
   The Minkowski functionals of simplices $\Delta$ and $\nabla$, and of their intersection $\Omega_n^\circ = \Delta\cap \nabla$ are the following
   \[
      \mu_\Delta(x) = n\max\{\lambda_i^-\}_{i=1}^n \qquad \mu_\nabla(x) = n\max\{\lambda_i^+\}_{i=1}^n \qquad \mu_{\Omega_n^\circ}(x) = n\max\{\vert\lambda_i\vert\}_{i=1}^n \, .
   \]
  \end{prop}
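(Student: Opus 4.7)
The plan is to compute $\mu_\Delta$ first by directly characterizing when $x\in r\Delta$, then obtain $\mu_\nabla$ from the identity $\nabla = -\Delta$, and finally derive $\mu_{\Omega_n^\circ}$ by combining the previous proposition with the relation (\ref{eqn:mink-presek}) applied to the identification $\Omega_n^\circ = \Delta\cap\nabla$.

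For the first step I would exploit the circuit property. Since $\ker(\Lambda) = \mathbb{R}\mathbbm{1}$, any representation $x = \sum_{i=1}^n s_i u_i$ is determined up to adding a common constant to the coefficients, while the zero-sum representation $x = \sum \lambda_i u_i$ is the unique one with $\sum_i \lambda_i = 0$. On the other hand, a point $y$ lies in $r\Delta$ iff it admits a representation $y = \sum_{i=1}^n s_i u_i$ with $s_i\geqslant 0$ and $\sum s_i = r$. Comparing these two normalizations, membership $x\in r\Delta$ translates into the existence of a constant $c$ such that $\lambda_i + c\geqslant 0$ for every $i$ and $\sum_i(\lambda_i + c) = nc = r$. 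The second equation forces $c = r/n$, and the first reduces to $r\geqslant -n\lambda_i$ for each $i$. Taking the infimum yields $\mu_\Delta(x) = n\max_i(-\lambda_i) = n\max_i\lambda_i^-$; here the replacement of $-\lambda_i$ by $\lambda_i^-$ is legitimate because $\sum\lambda_i = 0$ guarantees $\max_i(-\lambda_i)\geqslant 0$, so the two maxima agree.

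The second step is immediate from $\mu_\nabla(x) = \mu_\Delta(-x)$: the zero-sum representation of $-x$ has coefficients $-\lambda_i$, and $(-\lambda_i)^- = \lambda_i^+$. For the third step, the pointwise identity $\max\{a^+,a^-\} = |a|$ combined with (\ref{eqn:mink-presek}) gives
\[
\mu_{\Omega_n^\circ}(x) = \max\bigl\{\mu_\Delta(x),\mu_\nabla(x)\bigr\} = n\max_i\max\{\lambda_i^+,\lambda_i^-\} = n\max_i|\lambda_i|.
\]

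No genuine obstacle is anticipated. The only subtle point is the bookkeeping between the normalization $\sum t_i = 1$ used for convex combinations in $\Delta$ and the normalization $\sum \lambda_i = 0$ used for the canonical representation of $x$, a discrepancy absorbed by the uniform shift $c = r/n$. Once this is handled cleanly, the three formulas fall out in sequence and the appeal to $\Omega_n^\circ = \Delta\cap\nabla$ (established in the discussion preceding the proposition) closes the argument.
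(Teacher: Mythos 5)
Your proposal is correct and follows essentially the same route as the paper: both proofs exploit the circuit relation $\sum u_i=0$ to shift the zero-sum coefficients by a common constant (you phrase it as the normalization $c=r/n$ for membership in $r\Delta$, the paper as locating the boundary point $x_0=x/(n\lambda)$ with $\lambda=\max\lambda_i^-$), and then obtain $\mu_\nabla$ via $\mu_\nabla(x)=\mu_\Delta(-x)$ and $\mu_{\Omega_n^\circ}$ via (\ref{eqn:mink-presek}) together with $\max\{\lambda_i^+,\lambda_i^-\}=|\lambda_i|$.
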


 \begin{proof}
 Assuming that $x = \lambda_1a_1+\dots+\lambda_na_n\neq 0$, let us calculate the corresponding point $x_0\in \partial(\Delta)\cap {\rm Ray}(0,x)$, defined as the intersection point of the boundary of $\Delta$ with the ray emanating from the origin $0$, passing through the point $x$.

  If $\lambda := \max\{\lambda_i^-\}_{i=1}^n$ then
  \[
            x = (\lambda + \lambda_1)a_1 + \dots + (\lambda +\lambda_n)a_n
  \]
where $\lambda + \lambda_i \geq 0$ for each $i\in [n]$ and $\lambda + \lambda_j = 0$ for at least one $j\in [n]$.  A moment's reflection shows
\[
            x_0 = \frac{x}{n\lambda} \in \partial(\Delta)
\]
 which immediately implies that $\mu_\Delta(x) = n\lambda = n\max\{\lambda_i^-\}_{i=1}^n$.

Since $\mu_{-K}(x) = \mu_K(-x)$ we observe that
\[
  \mu_\nabla(x) = \mu_\Delta(-x) = n\max\{(-\lambda_i)^-\}_{i=1}^n = n\max\{(\lambda_i)^+\}_{i=1}^n \, .
\]
The third formula $\mu_{\Omega_n^\circ}(x) =  n\max\{\vert\lambda_i\vert\}_{i=1}^n$ is an immediate consequence of   (\ref{eqn:mink-presek}) and the relation $\max\{\lambda_i^+, \lambda_i^-\} = \vert \lambda_i\vert$.
\end{proof}

\medskip
Since $K = \{x\in \mathbb{R}^{n-1} \mid \mu_K(x) \leq 1\}$, as a corollary of Proposition \ref{prop:Mink-lepo} we obtain the following result.

\begin{cor}\label{cor:R_n}
\[
 \Omega_n^\circ = {\rm Conv}(\Delta\cap \nabla) =  \{ x = \lambda_1a_1+\dots \lambda_na_n \vert \, \lambda_1+\dots+ \lambda_n = 0 \mbox{ {\rm and} } (\forall i)\, \vert \lambda_i\vert \leq 1 \} \, .
\]
\end{cor}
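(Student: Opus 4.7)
The plan is to obtain the corollary as an immediate reformulation of Proposition \ref{prop:Mink-lepo}, using the general principle (stated just above the corollary in the excerpt) that any convex body $K$ containing the origin in its interior is recovered from its Minkowski functional via $K = \{x \mid \mu_K(x) \leq 1\}$.

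First, I would dispatch the equality $\Omega_n^\circ = \mathrm{Conv}(\Delta \cap \nabla)$. Since $\Delta$ and $\nabla$ are convex, so is $\Delta \cap \nabla$, so $\mathrm{Conv}(\Delta \cap \nabla) = \Delta \cap \nabla$. The identity $\Omega_n^\circ = \Delta \cap \nabla$ itself is nothing but the earlier observation that $(\mathrm{Conv}(K\cup L))^\circ = K^\circ \cap L^\circ$ applied to $K = \Delta$, $L = \nabla$, together with $\Delta^\circ = \nabla$ and $\nabla^\circ = \Delta$ (under the chosen normalization).

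Second, for the explicit inequality description, I would fix $x \in \mathbb{R}^{n-1}$ and take its unique representation $x = \sum_{i=1}^n \lambda_i u_i$ with $\sum_i \lambda_i = 0$; existence and uniqueness are guaranteed by the circuit property of $\{u_i\}$ (the kernel of the evaluation map $\Lambda$ is spanned by $\mathbbm{1}$). By Proposition \ref{prop:Mink-lepo} the Minkowski functional of $\Omega_n^\circ$ is $\mu_{\Omega_n^\circ}(x) = n\,\max_i |\lambda_i|$. The condition $x\in \Omega_n^\circ$ therefore becomes $n\,\max_i|\lambda_i| \leq 1$, i.e.\ $|\lambda_i|\leq 1/n$ for every $i$. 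Reading $a_i$ in the statement of the corollary as a synonym for $u_i$ (and absorbing the factor $n$ into the chosen scaling of the coefficients), this is exactly the description claimed.

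There is no genuine obstacle here; the entire argument is a one-line substitution once Proposition \ref{prop:Mink-lepo} is in hand. The only point worth a brief remark is the normalization convention relating the bound ``$|\lambda_i|\leq 1$'' in the corollary to the factor of $n$ produced by the Minkowski functional, which is purely cosmetic and can be absorbed by rescaling the coefficients $\lambda_i$.
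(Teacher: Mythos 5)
Your argument is correct and follows essentially the paper's own route: the corollary is read off from Proposition \ref{prop:Mink-lepo} via $K=\{x \mid \mu_K(x)\leq 1\}$, after identifying $\Omega_n^\circ$ with $\Delta\cap\nabla$ through $({\rm Conv}(K\cup L))^\circ=K^\circ\cap L^\circ$ and $\Delta^\circ=\nabla$, $\nabla^\circ=\Delta$. Your closing remark about the factor of $n$ is apt --- the paper itself is loose on this normalization (it writes $\Omega_n^\circ\cong\nabla\cap\Delta$ just before, but an equality in the corollary), and the discrepancy is harmless since only the affine-isomorphism type is used afterwards, in Theorem \ref{thm:R_n=hypersimplex}.
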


\begin{defin}\label{def:hypersimplex}
  A hypersimplex $\Delta_{n,r}$ with parameters $n,r$ is  defined as the convex hull of all $n$-dimensional vectors, vertices of the $n$-dimensional cube $[0,1]^n$, which belong to the hyperplane $x_1+\dots+ x_n = r$. Alternatively $\Delta_{n,r} = {\rm Newton}(\sigma_r)$ can be described as the Newton polytope of the elementary symmetric function $\sigma_r$ of degree $r$ in $n$ variables.
\end{defin}

\begin{theo}\label{thm:R_n=hypersimplex}
  If $n=2k$ is even then $\Omega_{2k}^\circ = \Delta\cap \nabla$ is affine isomorphic to the hypersimplex $\Delta_{2k,k}$.  If $n=2k+1$ then $\Omega_n^\circ$ is affine isomorphic to the convex hull
  \begin{equation}\label{eqn:hull}
    \Omega_{2k+1}^\circ \cong {\rm Conv}\{\lambda \in [0,1]^{2k+1}\mid \, (\forall i)\, \lambda_i\in\{0,1/2,1\}, \, \vert\{j \mid \lambda_j = 0\}\vert = \vert\{j \mid \lambda_j = 1\}\vert = k  \} \, .
  \end{equation}
\end{theo}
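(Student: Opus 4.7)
The plan is to move the problem from $\Omega_n^\circ \subset \mathbb{R}^{n-1}$ into a coordinate description in $\mathbb{R}^n$ via the circuit map $\Lambda$, and then apply one affine change of variables to recognize the result as either a slice of the unit cube (the hypersimplex) or the half-integer object displayed in (\ref{eqn:hull}).

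First I would invoke the circuit property of $\{u_i\}_{i=1}^n$: the linear map $\Lambda : \mathbb{R}^n \to \mathbb{R}^{n-1}$, $\lambda \mapsto \sum \lambda_i u_i$, is an epimorphism with kernel $\mathbb{R}\mathbbm{1}$, so its restriction to the hyperplane $H := \{\lambda \in \mathbb{R}^n : \lambda_1 + \dots + \lambda_n = 0\}$ is a linear isomorphism onto $\mathbb{R}^{n-1}$. By Corollary \ref{cor:R_n}, this restriction pulls $\Omega_n^\circ$ back to the polytope
\[
 P_n := \{\lambda \in [-1,1]^n : \lambda_1 + \dots + \lambda_n = 0\}.
\]
Next I would apply the affine automorphism $T(\lambda) = \tfrac{1}{2}(\lambda + \mathbbm{1})$ of $\mathbb{R}^n$, which carries $P_n$ onto
\[
 Q_n := \{\mu \in [0,1]^n : \mu_1 + \dots + \mu_n = n/2\}.
\]
The theorem is thus reduced to a vertex analysis of the cube slice $Q_n$ in the two parities.

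For the even case $n = 2k$ the slice height $n/2 = k$ is an integer. A standard argument shows that at any vertex of $Q_{2k}$ at least $2k-1$ of the box constraints $\mu_i \in \{0,1\}$ must be active, after which the remaining coordinate is forced by $\sum \mu_i = k$ and $\mu_i \in [0,1]$ to also lie in $\{0,1\}$. The resulting $\binom{2k}{k}$ vertices are precisely those listed in Definition \ref{def:hypersimplex}, so $Q_{2k} = \Delta_{2k,k}$ and hence $\Omega_{2k}^\circ \cong \Delta_{2k,k}$.

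For the odd case $n = 2k+1$ the same style of argument applies, but now with slice height $k + \tfrac{1}{2}$: at a vertex of $Q_{2k+1}$ exactly $2k$ of the coordinates lie in $\{0,1\}$ (say $p$ ones and $2k-p$ zeros), and the remaining coordinate equals $k + \tfrac{1}{2} - p$. Since $p$ is an integer and the remaining coordinate must lie in $[0,1]$, the only possibility is $p = k$ with the free coordinate pinned to $1/2$. The vertex set of $Q_{2k+1}$ then coincides term by term with the set inside the convex hull in (\ref{eqn:hull}), completing the isomorphism. The main — though still minor — obstacle is precisely this last odd-case pinning: one must combine the integrality of the $\{0,1\}$-coordinates with the half-integer total $k+\tfrac{1}{2}$ to rule out any value of the free coordinate other than $1/2$. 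Everything else (the circuit isomorphism, the affine shift $T$, and the even-case identification with the hypersimplex) is routine.
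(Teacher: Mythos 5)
Your proof is correct and follows essentially the same route as the paper: Corollary \ref{cor:R_n} together with the circuit isomorphism identifies $\Omega_n^\circ$ with the slice $[-1,1]^n\cap\{\lambda_1+\dots+\lambda_n=0\}$, and the affine change $\lambda_i=2x_i-1$ turns this into $[0,1]^n\cap\{x_1+\dots+x_n=n/2\}$, read off as $\Delta_{2k,k}$ or the polytope \eqref{eqn:hull} according to parity. The only difference is that you carry out explicitly the vertex analysis of the cube slice (including the odd-case pinning of the free coordinate to $1/2$), which the paper states as an immediate observation.
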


\begin{proof}
By Corollary \ref{cor:R_n} for each circuit $\{a_i\}_{i=1}^n$ the polytope   $R_n$ is affine isomorphic to the intersection of the hyperplane $\lambda_1+\dots+ \lambda_n = 0$ with the $n$-cube $[-1,+1]^n$. The (inverse of the) affine transformation $\lambda_i = 2x_i  -1 \, (i=1,\dots, n)$  maps this to the intersection of the hypercube $[0,1]^n$ with the hyperplane $x_1+\dots+x_n = n/2$.

If $n=2k$ we obtain the hypersimplex $\Delta_{2k,k} $. If $n=2k+1$ we obtain the polytope (\ref{eqn:hull}).
\end{proof}

\section{Wall crossing functions}
\label{sec:wall-crossing}

In this section we return to the question of polytopality of Bier spheres. The main result is a \emph{$K$-submodularity theorem} which for polytopal Bier spheres plays the role similar to the role of classical submodular functions (polymatroids) in the theory of generalized permutohedra.

\begin{prop}\label{prop:wall-crossing}{\rm (\cite{albertin_removahedral_2020})}
Let $\mathcal{F}$ be an essential complete simplicial fan in
$\mathbb{R}^n$ and $\mathbf{G}$ be the $N\times n$ matrix whose rows are
the rays of $\mathcal{F}$. Then the following are equivalent for any
vector $\mathbf{h} \in \mathbb{R}^N$. \begin{enumerate}[label=(\arabic*)]
     \item[{\rm (1)}] The fan $\mathcal{F}$ is the normal fan of the polytope
$P_{\mathbf{h}}:=\{x \in \mathbb{R}^n \mid \mathbf{G}x \leqslant
\mathbf{h} \}$.
     \item[{\rm (2)}] For any two adjacent chambers $\mathbb{R}_{\geqslant
0}\mathbf{R}$ and $\mathbb{R}_{\geqslant 0}\mathbf{S}$ of $\mathcal{F}$
with $\mathbf{R}\setminus \{r\}=\mathbf{S}\setminus \{s\}$,
     \begin{align}\label{eqn:wall-inequality}
         \alpha\mathbf{h_r}+\beta\mathbf{h_s}+\sum_{\mathbf{t}\in
\mathbf{R}\cap \mathbf{S}} \gamma_{\mathbf{t}}\mathbf{h_t}>0,
     \end{align} where \begin{align}\label{eqn:wall-equality}
         \alpha\mathbf{r}+\beta\mathbf{s}+\sum_{\mathbf{t}\in
\mathbf{R}\cap \mathbf{S}} \gamma_{\mathbf{t}}\mathbf{t}=0
     \end{align}
     is the unique (up to scaling) linear dependence with $\alpha,\beta>0$
between the rays of $\mathbf{R} \cup \mathbf{S}$. \end{enumerate}
\end{prop}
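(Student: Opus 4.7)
My plan is to recast both conditions in terms of the continuous piecewise-linear function $\phi_{\mathbf{h}} : \mathbb{R}^n \to \mathbb{R}$ naturally attached to $\mathbf{h}$, and then reduce the proposition to the classical equivalence between strict convexity of PL functions on complete simplicial fans and polytopality. Since $\mathcal{F}$ is simplicial and essential, for each maximal cone $\sigma_{\mathbf{R}}$ the ray set $\mathbf{R}$ is a basis of $\mathbb{R}^n$, so the system $\{r \cdot x = h_r\}_{r \in \mathbf{R}}$ has a unique solution $v_{\mathbf{R}}$; these are the candidate vertices. Define $\phi_{\mathbf{h}}$ on $\sigma_{\mathbf{R}}$ by $\phi_{\mathbf{h}}(y) := v_{\mathbf{R}} \cdot y$. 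Continuity across a wall $T = \mathbf{R}\cap\mathbf{S}$ is automatic since $v_{\mathbf{R}}$ and $v_{\mathbf{S}}$ agree on every $t \in T$ by construction.

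The direction $(1) \Rightarrow (2)$ is then a one-line check: under (1), each $v_{\mathbf{R}}$ is a vertex of $P_{\mathbf{h}}$, and adjacent chambers give adjacent, hence distinct, vertices. Evaluating the unique linear dependence $\alpha r + \beta s + \sum_t \gamma_t t = 0$ at $v_{\mathbf{R}}$ gives $\alpha h_r + \beta (s \cdot v_{\mathbf{R}}) + \sum_t \gamma_t h_t = 0$, and the strict feasibility $s \cdot v_{\mathbf{R}} < h_s$ (forced by $v_{\mathbf{R}} \neq v_{\mathbf{S}}$) rearranges to the wall inequality.

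For $(2) \Rightarrow (1)$, the same algebra read in reverse shows that (2) is equivalent to $s \cdot v_{\mathbf{R}} < h_s$ across every wall, i.e.\ to strict convexity of $\phi_{\mathbf{h}}$ on every pair of adjacent maximal cones. The crucial step is to promote this local strict convexity to the global inequality $v_{\mathbf{R}} \cdot y \leq \phi_{\mathbf{h}}(y)$ for all $y \in \mathbb{R}^n$: specialising to $y = r'$ then yields $r' \cdot v_{\mathbf{R}} \leq h_{r'}$ for every ray $r'$, so $v_{\mathbf{R}} \in P_{\mathbf{h}}$. Conversely, for any $x \in P_{\mathbf{h}}$ and any $y \in \sigma_{\mathbf{R}}$ written as $y = \sum \alpha_r r$ with $\alpha_r \geq 0$, the inequalities $r \cdot x \leq h_r$ combine to give $y \cdot x \leq \phi_{\mathbf{h}}(y)$. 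Hence $P_{\mathbf{h}}$ coincides with the polytope whose support function is $\phi_{\mathbf{h}}$, and strict convexity of $\phi_{\mathbf{h}}$ with maximal domains of linearity exactly $\{\sigma_{\mathbf{R}}\}$ forces its normal fan to be $\mathcal{F}$.

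The main obstacle is the promotion step from wall-wise strict convexity to global convexity of $\phi_{\mathbf{h}}$. I expect to handle this by a walking argument in the dual graph of $\mathcal{F}$: for fixed $\mathbf{R}$ and arbitrary $y$, choose a generic segment from the interior of $\sigma_{\mathbf{R}}$ to $y$ and apply the wall inequality inductively at each of the finitely many transverse crossings, iteratively dominating the linear form $v_{\mathbf{R}} \cdot (\cdot)$ by $\phi_{\mathbf{h}}(\cdot)$. Completeness of $\mathcal{F}$ ensures every $y$ is reached, and simpliciality guarantees that the local wall dependence $(\alpha,\beta,\gamma_t)$ is unique up to scaling, so the coefficients appearing in (\ref{eqn:wall-inequality}) and (\ref{eqn:wall-equality}) are intrinsically determined by the fan.
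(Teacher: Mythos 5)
The paper itself offers no proof of this proposition: it is imported verbatim from the cited reference \cite{albertin_removahedral_2020}, so there is no in-paper argument to compare yours against. Your proof is the standard support-function argument for this wall-crossing criterion and, as a sketch, it is correct: defining $v_{\mathbf{R}}$ by the square system $\{r\cdot x=h_r\}_{r\in\mathbf{R}}$ (legitimate since the fan is essential and simplicial), gluing the linear pieces $y\mapsto v_{\mathbf{R}}\cdot y$ into $\phi_{\mathbf{h}}$, and translating the wall inequality into $s\cdot v_{\mathbf{R}}<h_s$ is exactly how this equivalence is established in the literature.

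Two points in your write-up deserve to be made explicit. First, in $(1)\Rightarrow(2)$ you assert that under (1) each $v_{\mathbf{R}}$ is a vertex of $P_{\mathbf{h}}$; this silently uses that every inequality $r\cdot x\leqslant h_r$ is tight on the corresponding facet. That is true but needs a sentence: every ray of $\mathcal{F}$ is a ray of the normal fan, hence the outer normal of a facet of $P_{\mathbf{h}}$, and a facet of $\{x\mid \mathbf{G}x\leqslant\mathbf{h}\}$ must lie in one of the bounding hyperplanes, which (the rows being pairwise non-proportional) can only be $\{r\cdot x=h_r\}$; only then does the vertex with normal cone $\mathbb{R}_{\geqslant 0}\mathbf{R}$ coincide with $v_{\mathbf{R}}$, and $s\cdot v_{\mathbf{R}}<h_s$ follows from $v_{\mathbf{R}}\neq v_{\mathbf{S}}$. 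Second, the ``promotion'' step you defer to a walking argument is indeed the crux, but it goes through as you describe: subtracting $(\alpha r+\beta s+\sum_{t}\gamma_t t)\cdot v_{\mathbf{R}}=0$ from \eqref{eqn:wall-inequality} gives $\beta\,(h_s-s\cdot v_{\mathbf{R}})>0$, so $v_{\mathbf{S}}-v_{\mathbf{R}}$ vanishes on the wall hyperplane and pairs positively with $s$; hence along a generic segment each crossed wall can only increase the active linear form, yielding $v_{\mathbf{R}}\cdot y\leqslant\phi_{\mathbf{h}}(y)$ globally, and strictly when $y$ lies in the interior of a different chamber. That strictness is what you need to state to conclude that the normal fan of $P_{\mathbf{h}}$ is $\mathcal{F}$ itself and not a proper coarsening. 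With these two clarifications your argument is complete and matches the standard proof that the cited source relies on.
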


\begin{defin}
Given a (proper) simplicial complex $K\subsetneq 2^{[n]}$,  an element $A\in K$ is a \emph{boundary simplex} if\ $(\exists c\in [n])\, A\cup\{c\}\notin K$. Similarly $B\notin K$ is a \emph{boundary non-simplex} if \ $(\exists c\in [n])\, B\setminus \{c\}\in K$. A pair $(A, B')\in (K,2^{[n]}\setminus K)$ is a \emph{boundary pair} if $B' = A\cup\{c\}$ for some $c\in [n]$.
\end{defin}
We already know (Section \ref{sec:Bier_fans}) that boundary pairs $(A,B')$ correspond to maximal simplices in $Bier(K)$. In the following proposition we describe the ridges, i.e.\ the codimension one simplices in the Bier sphere $Bier(K)$.

\begin{prop}\label{prop:ridges}
The ridges (codimension one simplices) $\tau \in Bier(K)$  have one of the following three forms, exhibited in Figure \ref{ex-3}. Here we use the \emph{interval notation} $\tau = (X,Y)$ (Section \ref{sec:Bier_fans}) where $X\subsetneq Y, X\in K, Y\notin K$ and $(X,Y) \neq (\emptyset, [n])$.
\end{prop}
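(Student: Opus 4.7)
The plan is to encode each face of $Bier(K)$ via the interval notation from Section~\ref{sec:Bier_fans} and read off the dimensional constraint. A simplex $\tau = (A_1, A_2; B)\in Bier(K)$ has dimension $|A_1|+|A_2|-1 = n-|B|-1$, so ridges are precisely those with $|B| = 2$. Setting $(X, Y) = (A_1, A_1\cup B)$, the ridges therefore correspond to intervals with $X\in K$, $Y\notin K$, and $|Y\setminus X|=2$; writing $Y\setminus X = \{c_1,c_2\}$, the two sets strictly between $X$ and $Y$ in the Boolean lattice are $X\cup\{c_1\}$ and $X\cup\{c_2\}$.

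I would then partition ridges according to the membership of these two intermediate sets in $K$. Since $X\in K$ and $Y\notin K$ are already forced, only the status of $X\cup\{c_1\}$ and $X\cup\{c_2\}$ is free, and up to the $c_1\leftrightarrow c_2$ symmetry this yields exactly three cases: both intermediate sets lie in $K$, both lie outside $K$, or exactly one lies in $K$. These should match the three forms depicted in Figure~\ref{ex-3}.

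For completeness I would verify that each case is realized by describing the two facets covering the ridge. Starting from $\tau = (A_1, A_2; \{c_1,c_2\})$, a covering facet is obtained by absorbing some $c_i$ into either $A_1$ or $A_2$. Absorption into $A_1$ is legal iff $A_1\cup\{c_i\}\in K$, while absorption of $c_i$ into $A_2$ is legal iff $A_2\cup\{c_i\}\in K^\circ$, i.e.\ iff $A_1\cup\{c_{3-i}\}=[n]\setminus(A_2\cup\{c_i\})\notin K$. A short case table confirms that in each of the three cases exactly two of the four candidate moves succeed, matching the pseudomanifold property of the sphere $Bier(K)$. The main obstacle is purely clerical: one must keep track of the Boolean interval $[X,Y]$ and the four candidate cofacets without confusing roles, but no deeper tool beyond the Alexander-duality identity $A_2\in K^\circ \iff [n]\setminus A_2\notin K$ is needed.
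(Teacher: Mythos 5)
Your proposal is correct and follows essentially the same route as the paper: identify ridges with Boolean intervals $(X,Y)$ where $|Y\setminus X|=2$, then split into three cases according to how many of the intermediate sets $X\cup\{c_1\}$, $X\cup\{c_2\}$ lie in $K$. The extra dimension count ($|B|=2$) and the verification via Alexander duality that each ridge has exactly two covering facets merely make explicit what the paper leaves implicit.
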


\begin{figure}[htb]
    \centering
    \subfigure[$\Lambda$ configuration]{\begin{tikzpicture}[scale=0.9]
    \node (y) at (0,2) {$Y=X\cup \{c_1,c_2\}$};
    \node (x1) at (-2,0) {$X_1=X\cup \{c_1\}$};
    \node (x2) at (2,0) {$X_2=X\cup \{c_2\}$};
    \node (x) at (0,-2) {$X$};

    \draw (x) -- (x1) -- (y);
    \draw (x) -- (x2) -- (y);

    \draw[dashed] (-1.5,1) -- (1.5,1);
    \node[anchor=west] (bier) at (1.5,1) {$Bier(K)$};
\end{tikzpicture}}
    \subfigure[$V$ configuration]{\begin{tikzpicture}[scale=0.9]
    \node (y) at (0,2) {$Y=X\cup \{c_1,c_2\}$};
    \node (x1) at (-2,0) {$X_1=X\cup \{c_1\}$};
    \node (x2) at (2,0) {$X_2=X\cup \{c_2\}$};
    \node (x) at (0,-2) {$X$};

    \draw (x) -- (x1) -- (y);
    \draw (x) -- (x2) -- (y);

    \draw[dashed] (-1.5,-1) -- (1.5,-1);
    \node[anchor=west] (bier) at (1.5,-1) {$Bier(K)$};
\end{tikzpicture}}
    \subfigure[$X$ configuration]{\begin{tikzpicture}[scale=0.9]
    \node (y) at (0,2) {$Y=X\cup \{c_1,c_2\}$};
    \node (x1) at (-2,0) {$X_1=X\cup \{c_1\}$};
    \node (x2) at (2,0) {$X_2=X\cup \{c_2\}$};
    \node (x) at (0,-2) {$X$};

    \draw (x) -- (x1) -- (y);
    \draw (x) -- (x2) -- (y);

    \draw[dashed] (1.5,1.5) -- (-1.5,-1.5);
    \node[anchor=west] (bier) at (1.5,1.5) {$Bier(K)$};
\end{tikzpicture}}
    \caption{Configurations of maximal adjacent simplices in $Bier(K)$.}
    \label{ex-3}
\end{figure}
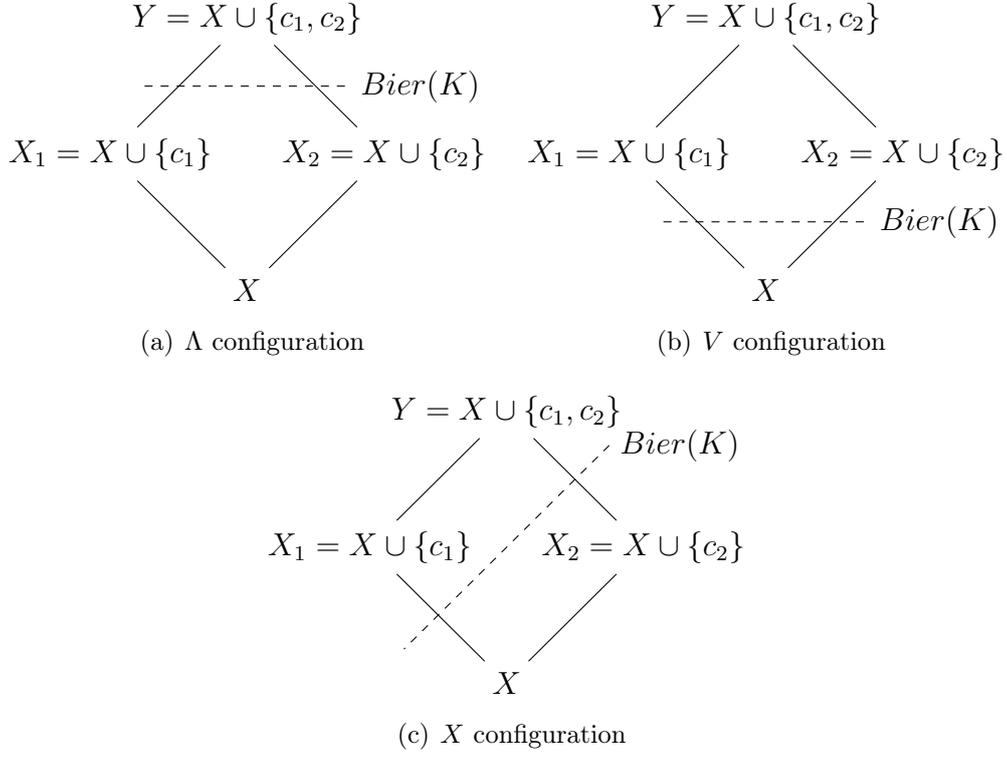

\begin{proof} In the interval notation, the ridges in $Bier(K)$ correspond to intervals $(X,Y)$ where $Y= X\cup \{c_1, c_2\}$ and $c_1\neq c_2$. The $\Lambda$-configurations correspond to the case when both $X_1$ and $X_2$ are in $K$, the $V$-configurations  correspond to the case when neither $X_1$ nor $X_2$ are in $K$, and the $X$-configurations arise if precisely one of these sets is in $K$.
\end{proof}

\begin{defin}\label{def:K-submodular}
Let $K\subsetneq 2^{[n]}$ be a simplicial complex and $Bier(K)$ the associated Bier sphere. A \emph{$K$-submodular function} ($K$-wall crossing function) is a function $f : Vert(Bier(K))\rightarrow \mathbb{R}$ such that
\begin{align}
f(c_1) + f(c_2) + \Sigma_{i\in X} f(i) > \Sigma_{j\notin Y} f(\bar{j}) & \mbox{\quad {\rm for each $\Lambda$-configuration} } \label{eq:K-1}\\ f(\bar{c}_1) + f(\bar{c}_2) + \Sigma_{j\notin X} f(\bar{j}) > \Sigma_{i\in X} f(i) & \mbox{\quad {\rm for each $V$-configuration} } \label{eq:K-2}\\
f(c_2) + f(\bar{c}_2) > 0 & \mbox{\quad {\rm for each $X$-configuration}.} \label{eq:K-3}
\end{align}
\end{defin}

\begin{theo}\label{thm:K-submodular}
  Let $\mathcal{F}= Fan(K)$ be the radial fan arising from the canonical starshaped realization of the associated Bier sphere $Bier(K)$. (The fan $\mathcal{F}$ is by Theorem \ref{thm:star} a coarsening of the braid fan.) Then $\mathcal{F}$ is a normal fan of a convex polytope if and only if the simplicial complex $K$ admits a $K$-submodular function. Moreover, there is a bijection between convex realizations of $Bier(K)$ with radial fan $\mathcal{F}$ and $K$-submodular functions $f$.

  \begin{proof}
  We apply Proposition \ref{prop:wall-crossing} to the fan $\mathcal{F}= Fan(K)$.

  Let $\delta = (\delta_1,\dots, \delta_n)$ be a circuit in $H_0$ where $\delta_i = e_i-\frac{u}{n}\, (u = e_1+\dots+ e_n)$. Let
  $\bar\delta = (\bar\delta_1,\dots, \bar\delta_n)$ be the opposite circuit where $\bar{\delta}_i := -\delta_i$. The vertices of $Bier(K)$ are $\{1,\dots, n, \bar{1}, \dots, \bar{n}\}$ and for the corresponding representatives on the one dimensional cones of the fan $\mathcal{F}= Fan(K)$ we choose $\{\delta_1,\dots, \delta_n, \bar\delta_1,\dots, \bar\delta_n\}$.

  Our objective is to identify the corresponding ``wall crossing relations'' (\ref{eqn:wall-equality}), in each of the three cases listed in Figure \ref{ex-3}, and to read off the associated ``wall crossing inequalities'' (\ref{eqn:wall-inequality}).

  \medskip
  In order to identify the wall crossing relations in the case of the $\Lambda$ and $V$ configurations we observe that, if $[n] = S\cup T$ and $S\cap T=\emptyset$ then, up to a linear factor, the only dependence in the set $\{\delta_i\}_{i\in S}\cup \{\bar\delta_j\}_{j\in T}$ is the relation
 \[
 \sum_{i\in S} \delta_i = \sum_{j\in T} \bar\delta_j \, .
 \]
The first two inequalities in Definition \ref{def:K-submodular} are an immediate consequence. To complete the proof it is sufficient to observe that, in the case of an $X$ configuration, the only dependence in the set $\{\delta_i\}_{i\in X}\cup \{\bar\delta_j\}_{j\notin Y}\cup\{\delta_{c_2}, \bar\delta_{c_2}\}$ is, up to a non-zero factor, the relation $\delta_{c_2} + \bar\delta_{c_2} = 0$.
  \end{proof}
\end{theo}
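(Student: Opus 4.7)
The plan is to apply Proposition \ref{prop:wall-crossing} directly to $\mathcal{F} = Fan(K)$. From Proposition \ref{prop:coincidence} the $2n$ rays of $\mathcal{F}$ are (up to positive scaling) the vectors $\{\delta_i,\bar\delta_i\}_{i\in[n]}$, and they correspond bijectively to the $2n$ vertices $\{1,\ldots,n,\bar 1,\ldots,\bar n\}$ of $Bier(K)$. Writing $f(v)$ for the entry of the height vector $\mathbf{h}$ attached to vertex $v$ reformulates Proposition \ref{prop:wall-crossing} as a statement about functions $f : Vert(Bier(K)) \to \mathbb{R}$, and the ``if and only if'' in the theorem (together with the claimed bijection) reduces to checking that the three families of wall inequalities produced by that proposition match exactly (\ref{eq:K-1})--(\ref{eq:K-3}).

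The second step is to enumerate the walls using Proposition \ref{prop:ridges}. Each ridge corresponds to an interval $(X,Y)$ with $Y = X \cup \{c_1,c_2\}$, and the two adjacent maximal cones depend on which of $X_1 = X\cup\{c_1\}$ and $X_2 = X\cup\{c_2\}$ lie in $K$. In every case the two cones together use $n$ of the $2n$ available rays, forming a set of the shape $\{\delta_i\}_{i \in S}\cup\{\bar\delta_j\}_{j \in T}$ with $S\sqcup T = [n]$. Since $\sum_{i=1}^n \delta_i = 0$ is the unique (up to scaling) linear relation inside the full circuit $\{\delta_1,\ldots,\delta_n\}$, and $\bar\delta_i = -\delta_i$, this restricts to a unique dependence $\sum_{i\in S}\delta_i = \sum_{j\in T}\bar\delta_j$ among the $n$ rays, from which the wall inequality is read off immediately. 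Explicit case analysis yields: in the $\Lambda$-case, $(S,T) = (Y,[n]\setminus Y)$ with differing rays $\delta_{c_1},\delta_{c_2}$, producing (\ref{eq:K-1}); in the $V$-case, $(S,T) = (X,[n]\setminus X)$ with differing rays $\bar\delta_{c_1},\bar\delta_{c_2}$, producing (\ref{eq:K-2}); in the $X$-case (say $X_2 \in K$, $X_1 \notin K$) the two adjacent facets $(X_2,Y)$ and $(X,X_1)$ differ only in the antipodal pair $\{\delta_{c_2},\bar\delta_{c_2}\}$, so the dependence degenerates to $\delta_{c_2}+\bar\delta_{c_2} = 0$ and produces (\ref{eq:K-3}). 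The bijection between convex realizations and $K$-submodular functions then follows from Proposition \ref{prop:wall-crossing}, since the polytope $P_{\mathbf{h}}$ is recovered uniquely from the heights $\mathbf{h}$ (equivalently from $f$).

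The step I expect to be most delicate is the combinatorial bookkeeping underlying the second step: for a given ridge $(X,Y)$ one has to verify that exactly two of the four natural extensions ``push $c_k$ into $A_1$'' or ``push $c_k$ into $A_2$'' yield a valid facet $(A_1, A_2; \{\nu\})$ with $A_1 \in K$ and $A_2 \in K^\circ$, and to check that these two surviving extensions are precisely the ones whose ray sets differ in the pair of generators listed above. This classification is what singles out the three configurations in Figure \ref{ex-3} and pins down which pair of rays supplies the ``$\alpha \mathbf{r} + \beta \mathbf{s}$'' part of the dependence in Proposition \ref{prop:wall-crossing}; once that is settled, matching the wall inequalities to (\ref{eq:K-1})--(\ref{eq:K-3}) is just an algebraic reading of the restricted circuit relation $\sum_{i\in S}\delta_i = \sum_{j\in T}\bar\delta_j$.
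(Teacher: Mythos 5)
Your proposal is correct and follows essentially the same route as the paper: apply the wall-crossing criterion of Proposition \ref{prop:wall-crossing} to $Fan(K)$ with ray representatives $\{\delta_i,\bar\delta_i\}$, classify the ridges via Proposition \ref{prop:ridges} into the $\Lambda$, $V$ and $X$ configurations, and read off \eqref{eq:K-1}--\eqref{eq:K-3} from the unique linear dependence among the rays of the two adjacent maximal cones. The only nitpick is that your blanket claim that the union of rays always has the shape $\{\delta_i\}_{i\in S}\cup\{\bar\delta_j\}_{j\in T}$ with $S\sqcup T=[n]$ fails in the $X$-configuration (there $c_2$ appears with both signs and $c_1$ not at all), but your explicit case analysis corrects this by using the degenerate relation $\delta_{c_2}+\bar\delta_{c_2}=0$, exactly as in the paper's proof.
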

As an illustration we use Theorem \ref{thm:K-submodular} to show that Bier spheres of threshold complexes are polytopal. This result was originally obtained in \cite{jevtic_bier_2019} (Theorem 2.2) by a different method.

\medskip
Suppose that $L = (l_1,l_2,\dots, l_n)\in \mathbb{R}^n_+$
is a strictly positive vector. The associated measure (weight distribution) $\mu_L$ on $[n]$ is defined by
$\mu_L(I) = \sum_{i\in I}~l_i$ (for each $I\subseteq [n]$).

\medskip
Given a threshold $\nu>0$, the associated threshold complex is $T_{\mu_L < \nu} := \{I\subseteq [n] \mid \mu_L(I)< \nu\}$.
Without loss of generality we assume that $\mu_L([n]) = l_1+\dots+ l_n = 1$. Moreover (\cite{jevtic_bier_2019}. Remark 2.1) we can always assume, without loss of generality, that $\mu_L(I)\neq \nu$ for each $I\subseteq [n]$, which implies that the Alexander dual of $K$ is $K^\circ = T_{\mu_L \leqslant 1- \nu} = T_{\mu_L < 1- \nu}$.

\begin{cor}{\rm (\cite{jevtic_bier_2019}, Theorem 2.2)} $Bier(T_{\mu_L<\nu})$ is isomorphic to the boundary sphere of a convex polytope which can be realized as a polar dual of a generalized permutohedron.
\end{cor}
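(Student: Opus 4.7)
My plan is to apply Theorem \ref{thm:K-submodular} by exhibiting an explicit $K$-submodular function for the threshold complex $K = T_{\mu_L < \nu}$. Once such a function is produced, the theorem guarantees that $Fan(K)$ is the normal fan of a convex polytope $P$, and Theorem \ref{thm:star} guarantees that $Fan(K)$ is a coarsening of the braid fan, so $P$ is a generalized permutohedron. The boundary sphere of its polar dual $P^{\circ}$ is then combinatorially isomorphic to $Bier(K)$, which is exactly the required conclusion.

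The naive attempt $f(i) = f(\bar{i}) = l_i$ fails: for a $\Lambda$-configuration it reduces the required inequality to $\mu_L(Y) > 1/2$, which is strictly stronger than the hypothesis $\mu_L(Y) > \nu$ and hence not available when $\nu < 1/2$. The fix is to scale the two halves asymmetrically according to the threshold; I propose
\begin{equation*}
 f(i) := (1-\nu)\, l_i, \qquad f(\bar{j}) := \nu\, l_j \qquad (i,j \in [n]).
\end{equation*}
The ratio $(1-\nu) : \nu$ is the unique normalization (after the standing convention $\mu_L([n]) = 1$) that makes the $\Lambda$- and $V$-inequalities simultaneously tight at the threshold $\mu_L = \nu$.

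Next I would verify the three inequalities of Definition \ref{def:K-submodular}. For a $\Lambda$-configuration with $Y = X \cup \{c_1, c_2\} \notin K$, both sides collapse to multiples of $\mu_L(Y)$:
\begin{equation*}
 f(c_1) + f(c_2) + \sum_{i \in X} f(i) = (1-\nu)\,\mu_L(Y), \qquad \sum_{j \notin Y} f(\bar{j}) = \nu\,\bigl(1 - \mu_L(Y)\bigr),
\end{equation*}
so the required strict inequality rearranges to $\mu_L(Y) > \nu$, which holds by $Y \notin K$. The $V$-configuration calculation is dual: after using $\mu_L(Y) = \mu_L(X) + l_{c_1} + l_{c_2}$, both sides collapse to multiples of $\mu_L(X)$ and $1 - \mu_L(X)$, and the inequality reduces to $\mu_L(X) < \nu$, which holds by $X \in K$. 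For an $X$-configuration, the wall-crossing relation identified in the proof of Theorem \ref{thm:K-submodular} is simply $\delta_c + \bar{\delta}_c = 0$ for the ``switching'' vertex $c$, so the condition becomes $f(c) + f(\bar{c}) = l_c > 0$, which holds because $L$ is strictly positive.

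The main obstacle is spotting the asymmetric scaling $(1-\nu,\,\nu)$; any symmetric choice fails to exploit the strict inequality in the definition of $T_{\mu_L < \nu}$ and cannot accommodate all values of $\nu$. Once that scaling is in hand, each of the three verifications is a one-line manipulation and the rest of the argument is a direct appeal to Theorems \ref{thm:star} and \ref{thm:K-submodular}.
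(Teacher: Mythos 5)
Your proposal is correct and follows essentially the same route as the paper: the same function $f(i)=(1-\nu)l_i$, $f(\bar{j})=\nu l_j$ is plugged into Theorem \ref{thm:K-submodular}, the $\Lambda$- and $V$-inequalities reduce to $\mu_L(Y)>\nu$ and $\mu_L(X)<\nu$ (available since $Y\notin K$, $X\in K$, using the standing genericity assumption $\mu_L(I)\neq\nu$), and the coarsening of the braid fan gives the generalized-permutohedron conclusion. Your explicit check of the $X$-configuration ($f(c)+f(\bar{c})=l_c>0$) is a small detail the paper leaves implicit, but the argument is the same.
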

\begin{proof}
Following Theorem \ref{thm:K-submodular}, it is sufficient to construct a $K$-submodular function $f : [n]\cup [\bar{n}]\rightarrow \mathbb{R}$ where $[n]\cup [\bar{n}] = Vert(Bier(K)) = \{1,\dots, n, \bar{1}, \dots, \bar{n}\}$.
Let us show that the function defined by
\begin{equation}\label{eq:f-threshold}
  f(i) = (1-\nu)l_i \qquad f(\bar{j}) = \nu l_j \qquad (i,j = 1,\dots, n)
\end{equation}
is indeed $K$-submodular for $K = T_{\mu_L<\nu}$. The inequalities \eqref{eq:K-1} and \eqref{eq:K-2}, for the function $f$ defined by \eqref{eq:f-threshold}, take (in the notation of Definition \ref{def:K-submodular} and Figure \ref{ex-3}) the following form
\begin{equation}\label{eq:both}
   \nu\mu_L(Y) > (1-\nu)\mu_L(Y^c)  \qquad (1-\nu)\mu_L(X) < \nu\mu_L(X^c) \, .
\end{equation}
However, in a threshold complex, both inequalities  \eqref{eq:both} hold without any restrictions on a simplex $X\in K$ and a non-simplex $Y\notin K$. (For example the second inequality in \eqref{eq:both} is a consequence of $\mu_L(X)< \nu$ and $\mu(X^c) > 1-\nu$.)

The convex polytope obtained by this construction is indeed the polar dual of a generalized permutohedron since the complete fan $\mathcal{F}= Fan(K)$ is a coarsening of the braid fan.
\end{proof}


\nocite{*}
\bibliographystyle{abbrv}
\bibliography{ref}

\end{document}